\makeatletter \@addtoreset{equation}{section} \makeatother
\renewcommand\thetable{\thesection.\@arabic\c@table}
\theoremstyle{plain}
\newtheorem{maintheorem}{Theorem}
\newtheorem{maincorollary}{Corollary}
\newtheorem{theorem}{Theorem }[section]
\newtheorem{proposition}[theorem]{Proposition}
\newtheorem{lemma}[theorem]{Lemma}
\newtheorem{corollary}[theorem]{Corollary}
\newtheorem{claim}{Claim}
\theoremstyle{definition} \theoremstyle{remark}
\newtheorem{remark}[theorem]{Remark}
\newtheorem{definition}[theorem]{Definition}
\newtheorem{question}{Question}
\newtheorem{conj}{Conjecture}
\newcommand{\Z}{\mathcal{Z}_{\mathrm{Diff}}}
\newcommand{\N}{\mathbb{N}}
\newcommand{\R}{\mathbb{R}}
\newcommand{\information}{{
  \bigskip
  \footnotesize
  	\textbf{Davi Obata}: \textsc{CNRS-Laboratoire de Math\'ematiques d'Orsay, UMR 8628, Universit\'e Paris-Sud 11, Orsay Cedex 91405, France } \par\nopagebreak
  \textsc{Instituto de Matem\'atica, Universidade Federal do Rio de Janeiro, P.O. Box 68530, 21945-970, Rio de Janeiro Brazil}\par\nopagebreak
  \textit{E-mail:} \texttt{davi.obata@math.u-psud.fr}
}}
\begin{document}

\title{Symmetries of vector fields: the diffeomorphism centralizer}
\author{Davi Obata}
\thanks{D.O. was supported by the ERC project 692925 NUHGD}
\maketitle

\begin{abstract}
In this paper we study the diffeomorphism centralizer of a vector field: given a vector field it is the set of diffeomorphisms that commutes with the flow. Our main theorem states that for a $C^1$-generic diffeomorphism having at most finitely many sinks or sources, the diffeomorphism centralizer is quasi-trivial. In certain cases, we can promote the quasi-triviality to triviality. We also obtain a criterion for a diffeomorphism in the centralizer to be a reparametrization of the flow.
\end{abstract}
\section{Introduction}
Given a dynamical system, it is natural to try to understand its different types of symmetries. For diffeomorphisms, one set of these symmetries that has been studied it is the so-called centralizer. 

Let $M$ be a connected compact riemannian manifold. Given a diffeomorphism $g:M \to M$, the $C^r$-centralizer of $g$ is the set of $C^r$-diffeomorphisms $f$ that commutes with $g$. If the centralizer is generated by $g$, then it is called trivial. In \cite{Smale0} and \cite{Smale}, Smale asked the following question:
\begin{question}[\cite{Smale0}, \cite{Smale}]
\label{q.smale}
Is the set of $C^r$ diffeomorphisms with trivial centralizer a residual set? That is, does it contain a dense $G_{\delta}$-subset of the space of $C^r$ diffeomorphisms? 
\end{question} 

This question remains open in this generality. However, there are several partial answers, see for instance \cite{Kopell}, \cite{PalisYoccoz01}, \cite{PalisYoccoz02}, \cite{Burslem}, \cite{Fisher2008}, \cite{Fisher2009} and \cite{BakkerFisher} for some of these results. In \cite{BonattiCrovisierWilkinson}, Bonatti-Crovisier-Wilkinson proved that the centralizer of a $C^1$-generic diffeomorphism is trivial, giving a positive answer to Question \ref{q.smale} for $r=1$. 

For vector fields there are different types of symmetries that one can consider. Given a $C^r$-vector field $X$, one can look at the $C^r$-vector field centralizer of $X$, which is the set of $C^r$-vector fields $Y$ that commute with $X$. 

There are several works that study the different types of triviality of the vector field centralizer (see \cite{LeguilObataSantiago} for the different notions of triviality). In \cite{KatoMorimoto} the authors proved that an Anosov flow has quasi-trivial vector field centralizer. This result was extended to (Bowen-Walters) expansive flows by Oka in \cite{Oka}. In \cite{Sad}, Sad proved the triviality of the vector field centralizer for an open and dense subset of $C^{\infty}$-Axiom A vector fields that verify a strong transversality condition. In \cite{BonomoRochaVarandas}, Bonomo-Rocha-Varandas proved the triviality of the centralizer of transitive Komuro expansive flows, which includes the Lorenz attractor. 

Bonomo-Varandas proved in \cite{BonomoVarandas2} that a $C^1$-generic divergence free vector field has trivial vector field centralizer (they also obtain a generic result for Hamiltonian flows in the same paper). In a different paper, \cite{BonomoVarandas}, Bonomo-Varandas obtain that $C^1$-generic sectional axiom A vector fields have trivial vector field centralizer (see the introduction of \cite{BonomoVarandas} for the definition of sectional axiom A). 

Recently, the author with Leguil and Santiago in \cite{LeguilObataSantiago} have proved that $C^1$-generic vector fields have quasi-trivial vector field centralizer, ``extending'' the result of \cite{BonattiCrovisierWilkinson} for this type of centralizer. 

In this work, we will study a different type of centralizer. Given a $C^r$-vector field $X$, we denote by $X_t$ the flow on time $t$ generated by $X$. For any $1\leq s \leq r$, we define the \textit{$C^s$-diffeomorphism centralizer of $X$} as
\begin{equation}
\label{def-centralizer}
\Z^s(X) := \{f\in \mathrm{Diff}^s(M): f\circ X_t = X_t \circ f,\textrm{ } \forall t\in \R\}.
\end{equation}
This is the set of diffeomorphisms that commutes with the flow. Throughout this paper we will refer to the $C^1$-diffeomorphism centralizer of a vector field $X$ as the $C^1$-centralizer of $X$. We remark that this type of centralizer is less rigid than the vector field centralizer, see section \ref{sec.furtherremarks} for a discussion on that.

We define two types of ``triviality'' for this centralizer. Given a $C^r$-vector field  $X$, we say that it has \textit{quasi-trivial $C^s$-centralizer} if for any $f\in \mathcal{Z}^s_{\mathrm{Diff}}(X)$, $f$ is a reparametrization of the flow $X_t$, that is, there exists a continuous function $\tau: M \to \R$ such that $f(.) = X_{\tau(.)}(.)$. If $X$ has quasi-trivial $C^s$-centralizer and for every element of the centralizer $f$, the function $\tau(.)$ is constant, then we say that the $C^s$-centralizer is \textit{trivial}. 

Observe that the set $\{X_t(.): t\in \R\}$ is always contained in the centralizer of $X$. So a vector field has trivial centralizer if the centralizer is the smallest possible. In this paper, we are interested in the following version of Question \ref{q.smale} for flows:

\begin{question}
\label{q.2}
For a $C^1$-generic vector field $X$, is its $C^1$-centralizer quasi-trivial? Is it trivial?
\end{question}
In what follows, we refer the reader to section \ref{sec.preliminaries} for precise definitions of basic dynamical objects. 

It is natural to study generic systems that present some form of ``hyperbolicity''. In this paper we will focus on $C^1$-generic vector fields that have at most finitely many sinks or sources. In \cite{AbdenurBonattiCrovisier}, the authors proved that such systems have a weak form of hyperbolicity named dominated splitting (see theorem \ref{thm.abc}). Our main result is the following:

\begin{maintheorem}
\label{thm.maintheorem}
There exists a $C^1$-residual subset $\mathcal{R} \subset \mathcal{X}^1(M)$ such that if $X\in \mathcal{R}$ has at most finitely many sinks or sources, then $X$ has quasi-trivial $C^1$-centralizer. Moreover, if in addition $X$ has at most countably many chain-recurrent classes, then $X$ has trivial $C^1$-centralizer.  
\end{maintheorem}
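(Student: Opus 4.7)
The plan is to extract from $C^1$-genericity an abundance of hyperbolic periodic orbits whose invariant manifolds are dense in their chain-recurrent class, and to exploit the rigidity these structures impose on any $f\in\Z^1(X)$. First I would fix a $C^1$-residual subset $\mathcal{R}\subset \mathcal{X}^1(M)$ so that, simultaneously, $X$ is Kupka-Smale, the conclusion of the Abdenur-Bonatti-Crovisier theorem yields a dominated splitting on the complement of the finitely many sinks and sources, hyperbolic periodic orbits are dense in the chain-recurrent set, and the invariant manifolds of periodic orbits are dense in their chain-recurrent class. Since any $f\in\Z^1(X)$ conjugates the flow to itself, it preserves sinks, sources, the set of periodic orbits together with their periods and stability types, and each chain-recurrent class setwise.

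The core technical step is to show that $f$ fixes each hyperbolic periodic orbit $\gamma$ setwise. The permutation that $f$ induces on the countable family of periodic orbits in a given chain-recurrent class preserves all local geometric data (periods, dimensions of stable/unstable manifolds, dominated splitting type); adapting the rigidity arguments of Bonatti-Crovisier-Wilkinson to the flow setting, and using that $f$ commutes with the continuous one-parameter group $X_t$ so has no nontrivial discrete permutation freedom, one argues that any such permutation is trivial. Once $f(\gamma)=\gamma$, the restriction $f|_\gamma$ commutes with the periodic flow on $\gamma\cong \Circ^1$ and must be a rotation $X_{\tau_\gamma}|_\gamma$. A holonomy/graph-transform argument along $W^s(\gamma)\cup W^u(\gamma)$, exploiting the dominated splitting, propagates the reparametrization to the closure of the invariant manifolds, and hence by density to the whole chain-recurrent class.

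This yields a function $\tau$ on the chain-recurrent set with $f(x)=X_{\tau(x)}(x)$. The main obstacle I anticipate is precisely the well-definedness and continuity of $\tau$ at periodic orbits, where it is only defined modulo the period; this should be handled using the dominated splitting and the orientability of the flow direction to choose a global continuous branch, and is where the Abdenur-Bonatti-Crovisier conclusion is essentially used. Outside the chain-recurrent set every orbit is wandering with $\alpha$- and $\omega$-limit sets in the chain-recurrent set, so one extends $\tau$ continuously to all of $M$ by pushing it along orbits and invoking continuity of $f$, obtaining quasi-triviality.

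For the final clause, the hypothesis of countably many chain-recurrent classes forces, generically, each class to be a chain-transitive invariant set; a continuous function $\tau$ satisfying $\tau\circ X_t=\tau$ on such a set is necessarily constant, so $\tau$ takes at most countably many values on $M$. These class-wise constants are then glued using continuity of $\tau$ along orbits crossing between classes via a Lyapunov filtration that $f$ must preserve, together with connectedness of $M$; this forces $\tau$ to be globally constant, and therefore the centralizer to be trivial.
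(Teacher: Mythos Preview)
Your outline misses the central mechanism of the proof and, as written, has a genuine gap at the step where you pass from periodic orbits to arbitrary orbits.

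The paper does \emph{not} argue by propagating a reparametrization outward from periodic orbits via holonomy. Instead, it first shows directly that $f$ fixes the orbit of every point in a dense subset of $M$, using two $C^1$-generic distortion properties: the \emph{unbounded normal distortion} (UND) on the wandering set and its analogue $\mathrm{UND}^s$ on stable manifolds. The key computation is that for $f\in\Z^1(X)$ one has $|\log|\det P_t(x)|-\log|\det P_t(f(x))||\leq 2\log\tilde K$ uniformly in $t$, with $\tilde K$ depending only on $\|f\|_{C^1}$; the UND property says this bound is violated for some $t$ unless $f(x)\in\mathrm{orb}(x)$. Your proposal never invokes this mechanism, and the substitutes you offer do not work. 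Knowing $f(W^{ss}(p))=W^{ss}(f(p))$ only tells you that $f(q)$ and $X_{\tau_\gamma}(q)$ lie in the same strong stable leaf, not that they coincide; a ``graph-transform'' argument would at best show $X_{-\tau_\gamma}\circ f$ commutes with a contraction on $W^{ss}(p)$, which in $C^1$ does not force it to be the identity. More seriously, your extension to the wandering set --- ``push $\tau$ along orbits from the $\alpha$- and $\omega$-limit sets'' --- is circular: you do not yet know $f$ fixes the wandering orbit at all, and the values of $\tau$ on the limit sets carry no information about where $f$ sends an interior point of the orbit.

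The second ingredient you underplay is how the dominated splitting is actually used. Once one knows $f(\cdot)=X_{\hat\tau(\cdot)}(\cdot)$ on an open dense set $W$, the formula $\Pi(f(x))\circ Df(x)|_{N(x)}=P_{\hat\tau(x)}(x)$ holds there, and the dominated splitting yields the \emph{large normal derivative} property: if $|\hat\tau|$ were unbounded one would find points where $\|P_{\hat\tau}\|$ or its inverse exceeds $\|f\|_{C^1}$, a contradiction. This gives a uniform $T$ with $f(x)\in X_{[-T,T]}(x)$ on $W$, and then density plus compactness of $X_{[-T,T]}(x)$ extend this to all of $M$; only then does Proposition~\ref{prop.mainpropextension} produce the continuous $\tau$. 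Your anticipated obstacle about choosing a branch of $\tau$ modulo the period is real, but it is resolved by this uniform bound, not by ``orientability of the flow direction.'' Your treatment of the countably-many-classes case is essentially correct in spirit.
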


We state some consequences of this theorem. In \cite{Peixoto2}, Peixoto proved that a $C^1$-generic vector field on a compact surface is Morse-Smale. Recall that a vector field is Morse-Smale if the non-wandering set is the union of finitely many hyperbolic periodic orbits and hyperbolic singularities, and it verifies some transversality condition. In particular, the non-wandering set is finite. As a consequence of Theorem \ref{thm.maintheorem} and the result of Peixoto, we have the following corollary. 
\begin{maincorollary}\label{cor.surfaces}
Let $M$ be a compact connected surface. Then, there exists a residual subset $ \mathcal{R}_\dagger \subset \mathfrak{X}^1(M)$ such that for any $X \in \mathcal{R}_\dagger$,  the $C^1$-centralizer of $X$ is trivial. 
\end{maincorollary}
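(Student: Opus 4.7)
The plan is to combine Theorem \ref{thm.maintheorem} with Peixoto's genericity theorem and check that, on a surface, the Morse--Smale property automatically supplies both hypotheses needed for the ``moreover'' clause of Theorem \ref{thm.maintheorem}. Concretely, I let $\mathcal{R} \subset \mathfrak{X}^1(M)$ be the residual set produced by Theorem \ref{thm.maintheorem}, and let $\mathcal{MS} \subset \mathfrak{X}^1(M)$ be the residual set of Morse--Smale vector fields given by Peixoto's theorem. I then set
\[
\mathcal{R}_\dagger \eqdef \mathcal{R} \cap \mathcal{MS},
\]
which is again residual since a finite intersection of residual sets in a Baire space is residual.

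The only real content is to verify that every $X \in \mathcal{R}_\dagger$ satisfies the two assumptions of the trivial-centralizer part of Theorem \ref{thm.maintheorem}. First, since $X$ is Morse--Smale, its non-wandering set is the union of finitely many hyperbolic singularities and finitely many hyperbolic periodic orbits. In particular the number of singularities is finite, so there can be at most finitely many sinks and at most finitely many sources, giving the first hypothesis. Second, I would invoke the standard fact that for a Morse--Smale flow the chain-recurrent set coincides with the non-wandering set, and that each hyperbolic singularity and each hyperbolic periodic orbit forms its own chain-recurrent class. Hence $X$ has only finitely many chain-recurrent classes, which is stronger than the ``at most countably many'' hypothesis required by Theorem \ref{thm.maintheorem}.

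With both hypotheses in hand, Theorem \ref{thm.maintheorem} directly yields that $\mathcal{Z}_{\mathrm{Diff}}^{1}(X)$ is trivial for every $X \in \mathcal{R}_\dagger$, proving the corollary. There is no genuine obstacle in this deduction: all nontrivial work is already packaged in Theorem \ref{thm.maintheorem} and in Peixoto's theorem. The only point demanding a citation or short justification is the identification of the chain-recurrent classes of a Morse--Smale flow with its individual hyperbolic singularities and periodic orbits, which I would either quote from a standard reference on Morse--Smale systems or derive in a line from the no-cycle property together with hyperbolicity of each piece of the non-wandering set.
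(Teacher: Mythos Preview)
Your proposal is correct and follows exactly the route indicated in the paper: intersect the residual set from Theorem~\ref{thm.maintheorem} with Peixoto's residual set of Morse--Smale vector fields, observe that a Morse--Smale flow has only finitely many critical elements (hence finitely many sinks and sources and finitely many chain-recurrent classes), and conclude via the ``moreover'' clause. The paper gives no separate proof beyond the paragraph preceding the corollary, so your write-up is in fact more detailed than the original.
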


 A $C^1$-vector field $X$ is \emph{Axiom A} if the non-wandering set is hyperbolic and the periodic points are dense in the non-wandering set. It is well known that Axiom A vector fields have finitely many chain-recurrent classes.

\begin{maincorollary}\label{cor.gentrivialityaxioma}
A $C^1$-generic Axiom A vector field has trivial $C^1$-centralizer.
\end{maincorollary}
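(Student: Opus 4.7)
The plan is to derive the corollary directly from Theorem \ref{thm.maintheorem} by checking that a $C^1$-generic Axiom A vector field satisfies both hypotheses of the ``moreover'' clause.

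First, I would recall that the set $\mathcal{A} \subset \mathcal{X}^1(M)$ of Axiom A vector fields is $C^1$-open: hyperbolicity of the non-wandering set together with density of periodic orbits is preserved under $C^1$-small perturbations. Since $\mathcal{R} \subset \mathcal{X}^1(M)$ is $C^1$-residual, the intersection $\mathcal{R}_A := \mathcal{R} \cap \mathcal{A}$ is $C^1$-residual in the open set $\mathcal{A}$; this is the natural notion of ``$C^1$-generic Axiom A vector field''.

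Next, let $X \in \mathcal{R}_A$. By the statement quoted in the paper, Axiom A vector fields admit only finitely many chain-recurrent classes, so in particular $X$ has at most finitely many sinks or sources, and at most countably (in fact finitely) many chain-recurrent classes. Both hypotheses of the second assertion of Theorem \ref{thm.maintheorem} are therefore satisfied, and we conclude that the $C^1$-centralizer $\Z^1(X)$ is trivial. Since this holds for every $X$ in the residual subset $\mathcal{R}_A$ of $\mathcal{A}$, the corollary follows.

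The only non-formal point is the openness of $\mathcal{A}$ in the $C^1$-topology, which I would expect to be standard and citable from the literature on hyperbolic flows; no additional dynamical argument beyond Theorem \ref{thm.maintheorem} is needed.
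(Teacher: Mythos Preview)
Your overall approach is exactly the one the paper intends: the corollary is an immediate consequence of Theorem \ref{thm.maintheorem}, since an Axiom A vector field has finitely many chain-recurrent classes and in particular finitely many sinks or sources. The paper does not give a separate proof.

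One technical point deserves care, however. Your claim that the set $\mathcal{A}$ of Axiom A vector fields is $C^1$-open is not correct in general: without the no-cycles condition, $\Omega$-explosions can occur under arbitrarily small $C^1$-perturbations, producing new non-wandering points outside the continuation of the original hyperbolic set. What is $C^1$-open is Axiom A together with the no-cycles condition (equivalently, $\Omega$-stability). Fortunately this does not affect the argument. The natural reading of ``$C^1$-generic Axiom A vector field'' here is simply ``$X \in \mathcal{R}$ which is Axiom A'', where $\mathcal{R}$ is the residual set of Theorem \ref{thm.maintheorem}; for any such $X$ the hypotheses of the ``moreover'' clause are satisfied and the conclusion follows. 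No openness of $\mathcal{A}$ is needed, so you can simply drop that step.
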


\begin{remark}
Corollary \ref{cor.gentrivialityaxioma} actually holds for more a general type of hyperbolic system called sectional Axiom A.
\end{remark}

Another corollary is for $C^1$-vector fields far from homoclinic tangencies in dimension three (see \cite{CrovisierYang} for precise definitions). By the proof of the Palis conjecture in dimension three given in \cite{CrovisierYang},  a $C^1$-generic $X \in \mathfrak{X}^1(M)$ which cannot be approximated by such vector fields  is singular axiom A (or sectional axiom A), in particular, it has a finite number of chain-recurrent classes. Hence:

\begin{maincorollary}\label{cor.dim3}
Let $M$ be a compact connected $3$-manifold. Then there exists a residual subset $\mathcal{R}_\ddagger\subset \mathfrak{X}^1(M)$ such that any vector field $X \in \mathcal{R}_\ddagger$ which cannot be approximated by vector fields exhibiting a homoclinic tangency has trivial $C^1$-centralizer.  
\end{maincorollary}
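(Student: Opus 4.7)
The plan is to deduce this corollary by intersecting the residual set $\mathcal{R}$ provided by Theorem \ref{thm.maintheorem} with a second residual set coming from the Crovisier--Yang resolution of the weak Palis conjecture in dimension three. No new dynamical input beyond these two black boxes is needed.

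More precisely, first I would invoke Theorem \ref{thm.maintheorem} to obtain a residual subset $\mathcal{R}\subset \mathfrak{X}^1(M)$ such that every $X\in\mathcal{R}$ with at most countably many chain-recurrent classes has trivial $C^1$-centralizer (the finiteness-of-sinks/sources hypothesis in Theorem \ref{thm.maintheorem} will be automatic in our setting, since each sink and each source is by itself a chain-recurrent class). Second, I would invoke the result of \cite{CrovisierYang}, which supplies a residual subset $\mathcal{R}' \subset \mathfrak{X}^1(M)$ with the property that if $X\in\mathcal{R}'$ cannot be $C^1$-approximated by vector fields exhibiting a homoclinic tangency, then $X$ is singular (sectional) Axiom~A. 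Setting
\[
\mathcal{R}_\ddagger := \mathcal{R}\cap \mathcal{R}',
\]
this intersection is again residual in $\mathfrak{X}^1(M)$.

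Now take any $X\in\mathcal{R}_\ddagger$ that cannot be approximated by vector fields with a homoclinic tangency. By the Crovisier--Yang part, $X$ is sectional Axiom~A, and it is a standard fact about such systems that the chain-recurrent set decomposes into \emph{finitely many} chain-recurrent classes. In particular $X$ has at most countably many chain-recurrent classes and, a fortiori, at most finitely many sinks or sources. Since $X\in \mathcal{R}$, the strong conclusion of Theorem \ref{thm.maintheorem} applies and yields that $\mathcal{Z}^1_{\mathrm{Diff}}(X)$ is trivial, which is exactly what we wanted.

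Honestly, there is no genuine obstacle here beyond locating the correct statement in \cite{CrovisierYang}: the mathematical substance sits entirely in Theorem \ref{thm.maintheorem} and in the Palis conjecture in dimension three. The only point one has to be slightly careful about is that \cite{CrovisierYang} is stated for diffeomorphisms and must be transferred to the vector field setting, but the sectional Axiom~A conclusion for $C^1$-generic three-dimensional vector fields far from tangencies is by now well-documented, so this is a citation rather than a proof step.
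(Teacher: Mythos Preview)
Your proposal is correct and follows essentially the same argument as the paper: the corollary is stated immediately after a sentence explaining that, by \cite{CrovisierYang}, a $C^1$-generic three-dimensional vector field far from homoclinic tangencies is singular (sectional) Axiom~A and hence has finitely many chain-recurrent classes, so Theorem~\ref{thm.maintheorem} applies. One small correction: \cite{CrovisierYang} is already a result about three-dimensional \emph{vector fields}, not diffeomorphisms, so no transfer is needed.
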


To prove Theorem \ref{thm.maintheorem}, we will need the proposition below. This proposition deals with the construction of the reparametrization of the flow, given a diffeomorphism in the centralizer that fixes orbits. We remark that this type of construction does not appear for diffeomorphisms.

\begin{proposition}
\label{prop.mainpropextension}
Let $X\in \mathfrak{X}^1(M)$ be a $C^1$-vector field whose periodic orbits and singularities are all hyperbolic. Let $f\in \Z^1(X)$ be an element of the centralizer with the following property: there exists a constant $T>0$ such that for every $p\in M$, we have $f(p) \in X_{[-T,T]}(p)$, where $X_{[-T,T]}(p)$ is the piece of orbit of $p$ from time $-T$ to $T$. Then there exists an $X$-invariant continuous function $\tau:M\to \R$ such that $f(.) = X_{\tau(.)}(.)$.
\end{proposition}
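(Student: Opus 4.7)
The plan is to construct $\tau$ first on the open subset $A$ of regular aperiodic points, where its value is forced, and then to extend it continuously across regular periodic points and across the finitely many hyperbolic singularities.

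For $p\in A$ (i.e., $X(p)\neq 0$ and $p$ is not periodic) the orbit map $t \mapsto X_t(p)$ is a bijection from $\R$ onto the orbit of $p$, so the hypothesis $f(p)\in X_{[-T,T]}(p)$ singles out a unique $\tau(p)\in[-T,T]$ with $f(p)=X_{\tau(p)}(p)$. Continuity of $\tau$ on $A$ follows by extracting subsequential limits: if $p_n\to p$ in $A$ and $\tau(p_{n_k})\to s$, then $X_s(p)=f(p)=X_{\tau(p)}(p)$, and aperiodicity of $p$ together with $|s|,|\tau(p)|\leq T$ forces $s=\tau(p)$. Flow-invariance $\tau(X_t(p))=\tau(p)$ is immediate from $f\circ X_t=X_t\circ f$ and uniqueness. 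Differentiating $f(p)=X_{\tau(p)}(p)$ at $p\in A$ in a direction $v$ yields
\begin{equation*}
df_p(v) \;=\; DX_{\tau(p)}(p)\, v \;+\; (d\tau_p\cdot v)\, X(f(p)),
\end{equation*}
and since $X(f(p))\neq 0$ in the regular set, this expresses $d\tau_p$ as a continuous linear form whose norm is controlled by $\|df\|$, $\|DX_{\tau(p)}\|$ and $1/|X(f(p))|$; hence $\tau$ is locally Lipschitz on $A$, uniformly on compact subsets of $\{X\neq 0\}$.

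Since periodic orbits of bounded period are finite in number by hyperbolicity and each is a one-dimensional submanifold, the full set of periodic orbits is a countable union of one-dimensional submanifolds, hence $A$ is dense in $\{X\neq 0\}$. Combined with the Lipschitz bound, any sequence in $A$ converging to a regular periodic point $q$ gives Cauchy values of $\tau$, so $\tau$ extends continuously to $\{X\neq 0\}$ by $\tau(q):=\lim_n\tau(p_n)$, and $f(q)=X_{\tau(q)}(q)$ passes to the limit.

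The main obstacle is the extension across a hyperbolic singularity $\sigma$, where the Lipschitz bound degenerates as $|X|\to 0$. One has $f(\sigma)=\sigma$ automatically, and differentiating the commutation at $\sigma$ shows that $df_\sigma$ commutes with $e^{tA}$ for every $t$, where $A:=DX(\sigma)$. For $p_n\to\sigma$ in $A$ with $\tau(p_n)\to s$ and unit directions $v_n:=(p_n-\sigma)/\|p_n-\sigma\|\to v$, a first-order Taylor expansion of $f(p_n)=X_{\tau(p_n)}(p_n)$ at $\sigma$ gives $df_\sigma\,v=e^{sA}\,v$. Hyperbolicity of $A$ (no eigenvalue on $i\R$) combined with the bound $|s|\leq T$ makes this equation uniquely solvable for $s$ from any nontrivial eigen-component of $v$: projected to a real eigenspace of eigenvalue $\mu\neq 0$ it reduces to $\lambda=e^{s\mu}$, which has a unique real solution. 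The orbit-preservation hypothesis applied to points $p_n$ with nonzero components in several eigendirections of $A$ (which are generic in $A$ near $\sigma$) forces the resulting $s$ to coincide across all eigenspaces, since a single orbit of the linearized flow must be carried to itself by a single time-shift; thus the subsequential limit $s$ is independent of the sequence. Setting $\tau(\sigma):=s$ gives the desired continuous, $X$-invariant extension of $\tau$ to all of $M$ with $f(p)=X_{\tau(p)}(p)$.
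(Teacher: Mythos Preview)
Your extension of $\tau$ across periodic orbits has a genuine gap. The uniform local Lipschitz bound on $\tau|_A$ does \emph{not} by itself force $\tau(p_n)$ to be Cauchy for every sequence $p_n\to q$ with $q$ periodic: near $q$ the set $A$ coincides with $U\setminus\gamma_q$ (hyperbolicity isolates $\gamma_q$ from other critical elements), and on a surface this set is disconnected. A uniform Lipschitz constant on each component says nothing about the jump of $\tau$ across $\gamma_q$; all you recover is that the two one-sided limits $\tau_1,\tau_2$ satisfy $\tau_1-\tau_2\in\pi(q)\,\mathbb{Z}$, and this need not vanish when $\pi(q)\le 2T$. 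The paper closes exactly this gap with a derivative argument whose ingredients you already wrote down: projecting your formula $df_p(v)=DX_{\tau(p)}(p)\,v+(d\tau_p\!\cdot v)\,X(f(p))$ onto the normal bundle gives $\Pi(f(p))\,df_p|_{N(p)}=P_{\tau(p)}(p)$; letting $p\to q$ from either side yields $P_{\tau_1}(q)=P_{\tau_2}(q)$, and hyperbolicity of $\gamma_q$ (no eigenvalue of $P_{\pi(q)}(q)$ on the unit circle) forces $\tau_1=\tau_2$. This is the content of the paper's Claim~\ref{claim.constantalongstablemanifoldperiodic}, phrased there via strong stable leaves and the $C^1$-regularity of $f|_{W^s(p)}$.

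Your treatment of singularities takes a genuinely different route from the paper, and the idea is sound but needs tightening. The paper splits into saddles (a $\lambda$-lemma argument matching the values on $W^s(\sigma)$ and $W^u(\sigma)$) and sinks/sources (comparing exponential rates of approach under $f^n$ versus $X_{nt}$; see Lemma~\ref{lem.sinksing}). Your linearization $df_\sigma v=e^{sA}v$ unifies the two cases; the clean way to finish is: since this relation holds for \emph{every} unit direction $v$ (take $p_n$ along the ray through $v$), it forces $df_\sigma=e^{s_0A}$ for a single $s_0$, whence any subsequential limit $s$ satisfies $e^{(s-s_0)A}v=v$, and hyperbolicity of $A$ gives $s=s_0$. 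Two points you glossed over: ``projected to a real eigenspace of eigenvalue $\mu$'' fails when all eigenvalues of $A$ are non-real (work with $\Re(\mu)\neq 0$ on the associated real $2$-plane instead), and the appeal to ``a single orbit of the linearized flow must be carried to itself'' is not quite the mechanism --- commutation of $df_\sigma$ with $e^{tA}$ only says that $df_\sigma$ permutes linearized orbits, not that it fixes each one.
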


Observe that in Theorem \ref{thm.maintheorem}, without the additional assumption of at most countably many chain recurrent classes, we do not get the triviality of the $C^1$-centralizer for a $C^1$-generic vector field that has at most finitely many sinks or sources.  What is missing to obtain the triviality of the $C^1$-centralizer in this case it is to prove that for a $C^1$-generic vector field every invariant continuous function is constant. This was conjectured (without precision on the regularities) by Ren\'e Thom (\cite{Thom}). 

\begin{conj}[\cite{Thom}]
 For a $C^1$-generic vector field, any $C^1$ (or $C^0$) invariant function of the manifold is constant.
\end{conj}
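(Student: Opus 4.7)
The plan is to reduce the conjecture to a connectivity statement about chain-recurrent classes and their heteroclinic orbits, and then to attempt to establish that connectivity by a generic perturbation argument.

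First I would note that if $\phi:M\to\R$ is a continuous $X$-invariant function, then $\phi$ is constant along every orbit and, by continuity, on every $\omega$- and $\alpha$-limit set. For a $C^1$-generic $X$ each chain-recurrent class is topologically transitive (a generic property analogous to the Bonatti--Crovisier structure theorem for diffeomorphisms), so $\phi$ takes a single value $\phi(C)$ on each chain-recurrent class $C$. Moreover, any orbit $O$ with $\alpha(O)\subset C_1$ and $\omega(O)\subset C_2$ forces $\phi(C_1)=\phi(O)=\phi(C_2)$. Since every orbit accumulates on the chain-recurrent set in both time directions, $\phi$ is then completely determined by its values on the chain-recurrent classes.

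This reduces the conjecture to the following: for a $C^1$-generic vector field $X$, the equivalence relation on the set of chain-recurrent classes generated by ``$C_1\sim C_2$ if there is a heteroclinic orbit between them'' has a single equivalence class. To obtain such generic heteroclinic connectivity, I would try to apply Hayashi's connecting lemma together with its flow versions developed in the spirit of Bonatti--Crovisier and Crovisier: these tools allow one to $C^1$-perturb $X$ so as to create an exact heteroclinic orbit between any two chain-recurrent classes whose stable and unstable sets come sufficiently close. A Baire-category argument over a suitable countable family of ``candidate disconnections'' of the chain-recurrent set should then promote these pointwise creations into a $C^1$-residual property.

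The principal obstacle is that generically the set of chain-recurrent classes need not be countable and may carry a Cantor-like structure, so no direct enumeration argument is available; worse, a perturbation creating one heteroclinic may fragment or destroy other classes, invalidating the Baire scheme. Circumventing this would seem to require genuinely new ideas, which is the reason Thom's conjecture remains open. A realistic intermediate target, consistent with the framework of the present paper, would be to establish the conjecture within the class of vector fields admitting a dominated splitting on their chain-recurrent set (the setting of Theorem A), where the heteroclinic combinatorics should be considerably more tractable and one might leverage Proposition \ref{prop.mainpropextension} together with the countably-many-classes hypothesis to handle the first nontrivial cases.
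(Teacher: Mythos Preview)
The statement you are addressing is not a theorem of the paper but Thom's conjecture, which the paper explicitly records as \emph{open}. The paper provides no proof of it; on the contrary, the discussion surrounding the conjecture explains that proving it is precisely what is missing to upgrade the quasi-triviality in Theorem~\ref{thm.maintheorem} to full triviality without the countably-many-classes hypothesis, and that after \cite{LeguilObataSantiago} the conjecture is equivalent to generic triviality of the vector-field centralizer. There is therefore no ``paper's own proof'' to compare your proposal against.

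Your proposal is candid about this: you sketch the reduction to heteroclinic connectivity of chain-recurrent classes, name the obstruction (uncountably many classes with a possible Cantor structure, instability of the class decomposition under the perturbations needed for the connecting lemma), and conclude that genuinely new ideas are required. That assessment matches the state of the art. Two minor remarks on the reduction itself. First, the fact that a continuous invariant function is constant on each chain-recurrent class does not require generic transitivity of the classes; it follows directly from Conley theory, since a continuous invariant function is a weak Lyapunov function in both time directions and hence constant on chain components. Second, your proposed ``intermediate target'' is essentially what the paper already accomplishes: under the extra hypothesis of at most countably many chain-recurrent classes the invariant function is shown to be constant (final paragraph of Section~\ref{sec.prooftheorem}, invoking Section~6.4 of \cite{LeguilObataSantiago}); dominated splitting by itself, without that countability assumption, is not presently known to suffice.
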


Also, after the work \cite{LeguilObataSantiago}, to conclude that a $C^1$-generic vector field has trivial $C^1$-vector field centralizer it is equivalent to proving Thom's conjecture.

Our approach to prove Theorem \ref{thm.maintheorem} is an adaptation for flows of the approach used by Bonatti-Crovisier-Wilkinson in \cite{BonattiCrovisierWilkinson}. We organize this paper as follows. In section \ref{sec.preliminaries} we will introduce some basic notions and notations of vector fields that we will use, we will also recall the main tools from $C^1$-generic dynamics that will be used. The proof of proposition \ref{prop.mainpropextension} is given in section \ref{sec.proofprop}. The proof of Theorem \ref{thm.maintheorem} is given in section \ref{sec.prooftheorem}. We conclude this paper with one example that justifies our claim that the diffeomorphism centralizer is less rigid than the vector field centralizer, in section \ref{sec.furtherremarks}.

\subsection*{Acknowledgments:}
The author would like to thank Sylvain Crovisier for all his patience and guidance with this project. The author also benefited from conversations with Alexander Arbieto, Martin Leguil, Mauricio Poletti and Bruno Santiago. 

\section{Preliminaries}
\label{sec.preliminaries}
In this section we introduce the notations we will use throughout this article and state some preliminary results on $C^1$-generic dynamics that will be used in our proofs.
 
\subsection{General notions on vector fields}

Let  $M$ be a  smooth manifold of dimension $d \geq 1$,  which we assume to be compact and boundaryless. For any $r \geq 1$, we denote by $\mathfrak{X}^r(M)$ the space of vector fields over $M$, endowed with the $C^r$ topology. A property $\mathcal{P}$ for vector fields in $\mathfrak{X}^r(M)$ is called $C^r$-generic if it is satisfied for any vector field in a  \textit{residual set} of $\mathfrak{X}^r(M)$. Recall that $\mathcal{R} \subset \mathfrak{X}^r(M)$ is \textit{residual} if it contains a dense $G_{\delta}$-subset of $\mathfrak{X}^r(M)$. 

In the following, given a vector field $X\in\mathfrak{X}^1(M)$, we denote by $\mathrm{Sing}(X):=\{x \in M: X(x)=0\}$ the set of \textit{singularities} (or \textit{zeros}) of $X$. The set of (non-singular) periodic points will be denoted by $\mathrm{Per}(X)$, and we set $\mathrm{Crit}(X) = \mathrm{Per}(X) \cup \mathrm{Sing}(X)$.

For any $x \in M$ and any interval $I \subset \R$, we also let $X_I(x):=\{X_t(x):t \in I\}$. In particular, we denote by $orb(x):=X_\R(x)$  the orbit of the point $x$ under $X$.

Let $X\in\mathfrak{X}^1(M)$ be some $C^1$ vector field. The \textit{non-wandering set} $\Omega(X)$ of $X$ is defined as the set of all points $x \in M$ such that for any open neighbourhood $U$ of $x$ and for any $T >0$, there exists a time $t >T$ such that $U\cap X_t(U) \neq \emptyset$.

Let us also recall another weaker notion of recurrence. Given two points $x,y \in M$, we write $x\prec_X y$ if for any $\varepsilon>0$ and $T>0$, there exists an $(\varepsilon,T)$-\textit{pseudo orbit} connecting them, i.e., there exist $n \geq 2$, $ t_1,t_2,\dots,t_{n-1} \in [T,+\infty)$,  and $x=x_1,x_2,\dots,x_{n}=y\in M$, such that $d(X_{t_j}(x_j),x_{j+1})< \varepsilon$, for $j \in \{1,\dots,n-1\}$.  The \textit{chain recurrent set} $\mathcal{CR}(X)\subset M$ of $X$ is defined as the set of all points $x \in M$ such that $x \prec_X x$. Restricted to $\mathcal{CR}(X)$, we consider the equivalence relation given by $x\sim_X y$ if and only if $x\prec_X y$ and $y\prec_X x$. An equivalence class under the relation $\sim_X$ is called a \textit{chain recurrent class}: $x, y \in \mathcal{CR}(X)$ belong to the same chain recurrent class if $x \sim_X y$. In particular, chain recurrent classes define a partition of the chain recurrent set $\mathcal{CR}(X)$. 
 
An $X$-invariant compact set $\Lambda$ is \textit{hyperbolic} if there is a continuous decomposition of the tangent bundle over $\Lambda$, $T_{\Lambda}M = E^s \oplus \langle X \rangle \oplus E^u$ into $DX_t$-invariant subbundles that verifies the following property: there exists $T>0$ such that for any $x\in \Lambda$, we have
\[
\|DX_T(x)|_{E^s_x}\| < \frac{1}{2} \textrm{ and } \|DX_{-T}(x)|_{E^u_x}\|< \frac{1}{2}.
\]   
A periodic point $x\in \mathrm{Per}(X)$ is hyperbolic if $orb(x)$ is a hyperbolic set. Let $p\in \mathrm{Per}(X)$ be a hyperbolic periodic point and $\gamma_p$ be its orbit. We define its strong stable manifold as
\begin{equation}
\label{eq.defstrongstable}
W^{ss}(p) :=\{ x\in M: d(X_t(x), X_t(p)) \xrightarrow{t\to +\infty} 0\}.
\end{equation}
The stable manifold theorem states that $W^{ss}(p)$ is an immersed submanifold of dimension $\mathrm{dim}(E^s(p))$ tangent to $E^s(p)$ at $p$. We define the \textit{stable manifold} of the orbit of $p$ as
\[
W^s(p) = \displaystyle \bigcup_{q\in \gamma_p} W^{ss}(q).
\]  
A hyperbolic periodic orbit is a sink if the unstable direction is trivial. It is a source if the stable direction is trivial. A hyperbolic periodic orbit is a saddle if it is neither a sink nor a source.

\subsection{$C^1$-generic dynamics, the unbounded normal distortion and large normal derivative properties}

In this part we will present the main tools from $C^1$-generic dynamics that we will need to prove Theorem \ref{thm.maintheorem}. Let us first fix some notation.
 
For a given vector field $X\in \mathfrak{X}^1(M)$, we define the non-singular set as $M_X : = M-Sing(X)$. Observe that for a fixed riemannian metric, for any point $p\in M_X$, it is well defined the subspace orthogonal to the vector field direction, $N^X(p) = \langle X(p) \rangle^{\perp}$. This define the normal bundle 
\[
N^X = \displaystyle \bigsqcup_{p\in M_X} N^X(p)
\]
over $M_X$. Let $\Pi_X: TM_X \to N^X$ be the orthogonal projection on $N^X$. Whenever it is clear that we have fixed a vector field $X$, we will denote the normal bundle and othogonal projection by $N$ and $\Pi$. 

On $N^X$ we have a well defined flow, called the \textit{linear Poincar\'e flow} defined as follows: for any point $p\in M_X$, vector $v\in N^X(p)$ and time $t\in \R$, the image of $v$ by the linear Poincar\'e flow is
\[
P_{t}(p).v = \Pi_X(X_t(p)) \circ DX_t(p)v.
\] 
Next, we define two notions that will be crucial in our proof. 
\begin{definition}[Unbounded normal distortion (UND)]
\label{def.und}
Let $X\in \mathfrak{X}^1(M)$ be a $C^1$-vector field. We say that $X$ verifies the \textit{unbounded normal distortion} property if the following holds: there exists a dense subset $\mathcal{D} \subset M- \mathcal{CR}(X)$, such that for any $K \geq 1$, $x\in \mathcal{D}$ and $y\in M-\Omega(X)$ verifying $y\notin orb(x)$, there is $n\in (0,+\infty)$, such that
\[
|\log \det P_{n}(x) - \log \det P_{n}(y)| > K.
\] 
\end{definition}

\begin{definition}[Unbounded normal distortion on stable manifolds ($\mathrm{UND}^s$)]
\label{def.unds}
Let $X\in \mathfrak{X}^1(M)$ be a $C^1$-vector field. We say that $X$ verifies the \emph{unbounded normal distortion on stable manifolds} property if the following holds: for any $p\in Crit(X)$, there exists a dense subset $\mathcal{D}^s_p \subset W^s(p)$, such that for any $K \geq 1$, $x\in \mathcal{D}^s_p$ and $y\in W^s(p)$ verifying $y\notin orb(x)$,  there is $n\in (0,+\infty)$, such that
\[
|\log \det \left(P_{n}(x)|_{T_{x}W^s(p)}\right) - \log \det \left(P_{n}(y)|_{T_yW^s(p)}\right)| > K.
\] 
\end{definition}
Given a vector field $X$ that has the $\mathrm{UND}^s$ property, we define
\[
\mathcal{D}^s = \displaystyle \bigcup_{p\in \mathrm{Per}(X)} D^s_p.
\]

In \cite{BonattiCrovisierWilkinson}, the authors introduce these notions for diffeomorphisms and they use it as the main ingredient to obtain triviality of the centralizer in an open and dense subset of the manifold. They prove that these properties actually hold $C^1$-generically. For vector fields, the $C^1$-genericity of the UND property was proved in \cite{LeguilObataSantiago}, and the $C^1$-genericity of the $\mathrm{UND}^s$ property was proved in \cite{BonomoVarandas}. We summarize it in the following theorem:

\begin{theorem}[\cite{LeguilObataSantiago} and \cite{BonomoVarandas}]
\label{thm.undundsgeneric}
There exists a residual subset $\mathcal{R}_1\subset \mathfrak{X}^1(M)$ such that any vector field $X\in \mathcal{R}^1$ verifies the UND and $\mathrm{UND}^s$ properties. 
\end{theorem}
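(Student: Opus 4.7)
The plan is to exhibit the residual set $\mathcal{R}_1$ as a countable intersection of $C^1$-open and dense subsets, each imposing one specific instance of a distortion inequality. I would begin inside the Kupka--Smale residual set, so that all critical elements are hyperbolic and stable/unstable manifolds vary continuously under $C^1$-perturbations on compact pieces. Then I would fix a countable dense family $\{x_i\}_{i\in\N} \subset M$ and, for each triple $(i,j,K) \in \N^3$, let $\mathcal{U}_{i,j,K}$ be the set of vector fields $X$ such that at least one of the following holds: $x_i \in \mathcal{CR}(X)$, $x_j \in \Omega(X)$, $x_j \in \mathrm{orb}^X(x_i)$, or there exists $n>0$ with $|\log \det P_n(x_i) - \log\det P_n(x_j)| > K$. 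If each $\mathcal{U}_{i,j,K}$ is open and dense, then their intersection yields the UND property, the role of $\mathcal{D}$ in Definition \ref{def.und} being played by those $x_i$ that happen to lie outside $\mathcal{CR}(X)$ for the given $X$.

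Openness follows from continuity, on the complement of the excluded configurations, of $X \mapsto P_n(x_i)$ and $X \mapsto P_n(x_j)$ in the $C^1$-topology, together with the fact that each of the excluded configurations is itself preserved by small perturbations (for instance, if $x_j \in \mathrm{orb}^X(x_i)$ after perturbation, then by closeness the original $x_j$ was near $\mathrm{orb}^X(x_i)$ and is handled by approximation). For density, the key tool is a flow-box Franks-type perturbation: if $x_i$ is wandering and not on the orbit of $x_j$, there exist arbitrarily many pairwise disjoint flow-boxes around points of $X_{[0,+\infty)}(x_i)$ whose union is disjoint from a fixed neighborhood of $\mathrm{orb}(x_j)$. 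One can multiply $X$ by a smooth conformal factor supported in such a box, which alters $\log\det P_T(x_i)$ by a prescribed amount proportional to the integral of the factor, while leaving $P_T(x_j)$ untouched because of support disjointness. Iterating in finitely many disjoint boxes produces a perturbation in any prescribed $C^1$-neighborhood that achieves distortion larger than $K$.

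For $\mathrm{UND}^s$, the same architecture applies with $W^s(p)$ replacing $M$ and $P_n|_{TW^s}$ replacing $P_n$, but two subtleties arise and constitute the main obstacle. First, $W^s(p)$ is an immersed noncompact manifold, so one works inside a fundamental domain and uses a countable dense family there, transported by the flow. Second, and more seriously, the perturbation must be consistent with the motion of the stable manifold itself: after perturbation, the manifold $W^s(p;\tilde X)$ differs from $W^s(p;X)$, so the evaluation point $x_j$ must be replaced by a nearby point on the perturbed stable manifold, and one must check that the quantity $\log\det(P_n|_{T W^s(p)})$ depends continuously on $X$ at $x_j$. This continuity is supplied by the $C^1$-stability of hyperbolic stable manifolds on compact sets. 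One then chooses the perturbation to act inside the stable cone field transverse to $\mathrm{orb}(x_j)$, so that its effect on $\det(P_n|_{E^s(x_i)})$ is explicit while $\det(P_n|_{E^s(x_j)})$ moves by at most a continuous error that goes to zero with the perturbation size. Once density is verified in both cases, a countable intersection over $(i,j,K)$ and over the finitely many critical orbits appearing in the Kupka--Smale generic vector field produces the desired residual set $\mathcal{R}_1$.
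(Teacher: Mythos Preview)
First, note that the paper does not itself prove Theorem~\ref{thm.undundsgeneric}; it is quoted from \cite{LeguilObataSantiago} and \cite{BonomoVarandas}, so there is no in-paper argument to compare against. That said, your proposal has a genuine structural gap. In your scheme for UND you fix a countable dense family $\{x_j\}$ and, for each triple $(i,j,K)$, demand one instance of the distortion inequality. But Definition~\ref{def.und} requires that for each $x\in\mathcal D$ the distortion be unbounded against \emph{every} $y\in M\setminus\Omega(X)$ off $\mathrm{orb}(x)$, not just against the countably many $y=x_j$. There is no passage from your countable condition to arbitrary $y$: continuity of $y\mapsto\log\det P_n(y)$ holds only for fixed $n$, while the time $n=n(i,j,K)$ you obtain is unbounded as $x_j\to y$. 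The actual proofs (following \cite{BonattiCrovisierWilkinson}) use a different device: perturb along the forward orbit of a single wandering point $x$ so that $n\mapsto\log|\det P_n(x)|$ acquires a prescribed highly irregular profile; since $\log|\det P_{n+1}(y)|-\log|\det P_n(y)|$ is uniformly bounded for \emph{all} $y$ (because the $C^1$-norm of $X$ is finite), the irregularity at $x$ beats every other orbit simultaneously. This ``one-point-against-all'' mechanism is the missing idea.

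There are further difficulties. Your sets $\mathcal U_{i,j,K}$ are not open: the alternatives $x_i\in\mathcal{CR}(X)$ and $x_j\in\Omega(X)$ define \emph{closed} subsets of $\mathfrak X^1(M)$ (upper semicontinuity of $\mathcal{CR}$ and $\Omega$), and $x_j\in\mathrm{orb}^X(x_i)$ is neither open nor closed, so the union need not be open. Your perturbation by a conformal factor $Y=\phi X$ only reparametrizes time: since $N^Y=N^X$ and $P^Y_s(x)=P^X_{\theta(x,s)}(x)$, no new values of $\det P_t$ are created along the orbit of $x_i$, so distortion cannot in general be forced this way; one needs a Franks-type perturbation that alters the \emph{transverse} derivative. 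Finally, for $\mathrm{UND}^s$ you speak of ``the finitely many critical orbits appearing in the Kupka--Smale generic vector field'', but Kupka--Smale vector fields typically have countably infinitely many periodic orbits, and the dense sets $\mathcal D^s_p\subset W^s(p)$ must depend on $X$ (since $W^s(p)$ does), so they cannot be chosen in advance as a fixed countable subset of $M$.
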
 

Given a non-singular invariant set $\Lambda\subset M_X$, we say that it admits a dominated splitting for the linear Poincar\'e flow if there exists a $P_t$-invariant, non-trivial decomposition of the normal bundle $N_{\Lambda}^X = E \oplus F$ and a constant $T>0$ such that for any $x\in \Lambda$ 
\[
\|P_{T}(x)|_E\|.\|(P_{T}(x)|_{F})^{-1}\| <\frac{1}{2}.
\]

In \cite{AbdenurBonattiCrovisier}, the authors proved that $C^1$-generically for a diffeomorphism far from the existence of infinitely many periodic sinks or sources (Newhouse phenomenon), one can obtain that the non-wandering set is decomposed into the disjoint union of finitely many periodic sinks or sources and invariant sets each of which admits a dominated decomposition. The key ingredient in their proof is a result of Bonatti-Gourmelon-Vivier (see corollary 2.19 in \cite{BonattiGourmelonVivier}), which is a generalization of a previous theorem in \cite{BonattiDiazPujals}. In \cite{BonattiGourmelonVivier}, the authors also prove a version of corollary 2.19 for flows, given by corollary 2.22. Using this, it is easy to adapt the proof of Abdenur-Bonatti-Crovisier to obtain the following statement:

\begin{theorem}[\cite{AbdenurBonattiCrovisier}]
\label{thm.abc}
There exists a residual subset $\mathcal{R}_2 \subset \mathfrak{X}^1(M)$ such that for any $X\in \mathcal{R}_2$, either (1) or (2) holds:
\begin{enumerate}
\item the non-wandering set admits a decomposition
\begin{equation}
\label{eq.decomposition}
\Omega(X) = \mathrm{Sink}(X) \sqcup \mathrm{Source}(X) \sqcup \Lambda_1 \sqcup \cdots \sqcup \Lambda_{k_X},
\end{equation}
such that $\mathrm{Sink}(X)$ is the set of periodic sinks of $X$, the set $\mathrm{Source}(X)$ is the set of periodic sources of $X$, and each $\Lambda_i-Sing(X)$ admits a dominated splitting for the linear Poincar\'e flow;
\item there are infinitely many periodic sinks or sources.
\end{enumerate}
\end{theorem}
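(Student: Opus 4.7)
The plan is to mirror the proof of Abdenur-Bonatti-Crovisier for diffeomorphisms almost verbatim, with the only substantive change being to replace the linear cocycle perturbation lemma of Bonatti-Gourmelon-Vivier (Corollary 2.19 of \cite{BonattiGourmelonVivier}) by its flow version (Corollary 2.22 of the same paper), applied to the linear Poincar\'e flow along periodic orbits.

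First I would intersect a countable collection of standard generic subsets of $\mathfrak{X}^1(M)$: the Kupka-Smale property, so that singularities and periodic orbits are hyperbolic; the continuity of the maps $X \mapsto \Omega(X)$, $X \mapsto \mathrm{Sink}(X)$ and $X \mapsto \mathrm{Source}(X)$ in the Hausdorff topology; and the density of periodic orbits in each non-trivial chain recurrent class, which follows from the $C^1$ connecting lemma for flows combined with Conley-type arguments. Call this residual intersection $\mathcal{R}_0$. For each $n \in \mathbb{N}$, let $\mathcal{U}_n \subset \mathfrak{X}^1(M)$ be the open set of vector fields possessing more than $n$ periodic sinks or sources. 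The open dense set $V_n := \mathcal{U}_n \cup \mathrm{int}(\mathfrak{X}^1(M) \setminus \mathcal{U}_n)$ has residual intersection over all $n$, and a standard Baire argument shows that $\mathcal{R}_2 := \mathcal{R}_0 \cap \bigcap_n V_n$ satisfies the following dichotomy: either $X \in \mathcal{U}_n$ for every $n$, giving case (2), or there exists $n$ for which the number of sinks and sources is bounded by $n$ and locally constant on a neighborhood of $X$.

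In the latter situation, the non-wandering set decomposes into the finitely many sinks and sources together with the remaining chain recurrent classes, which are themselves in finite number by semicontinuity arguments at a continuity point of the chain recurrent decomposition. Labelling those remaining classes $\Lambda_1, \ldots, \Lambda_{k_X}$, the task reduces to showing that each $\Lambda_i \setminus \mathrm{Sing}(X)$ admits a dominated splitting for the linear Poincar\'e flow. By density of periodic orbits in $\Lambda_i$, it suffices to obtain a uniform index-$j$ dominated splitting along every periodic orbit of $\Lambda_i$ (for some suitable $j$) and then pass to the limit through invariant cone fields. Arguing by contradiction, if no such uniform domination existed, there would be periodic orbits in $\Lambda_i$ of arbitrarily large period along which the domination ratio becomes arbitrarily close to $1$ at every index. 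The flow version of Bonatti-Gourmelon-Vivier would then allow a $C^1$-small perturbation of $X$ near such an orbit producing a linear Poincar\'e map with prescribed spectrum, in particular one of modulus $<1$ at every index (or $>1$); a further small perturbation would collapse this to a genuine sink or source, contradicting the local constancy of the number of sinks and sources near $X$.

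The step I expect to be the main obstacle is the treatment of the singularities accumulated by the classes $\Lambda_i$: the linear Poincar\'e flow is defined only on $M \setminus \mathrm{Sing}(X)$, so the domination obtained along periodic orbits must be uniform up to the singular locus if it is to survive the limiting argument. In the diffeomorphism setting of \cite{AbdenurBonattiCrovisier} this issue is absent. Here one must exploit the hyperbolicity of each singularity (from the Kupka-Smale condition in $\mathcal{R}_0$) and the local product structure near singular orbits to ensure that the dominated splitting extends continuously in a controlled fashion, possibly working with the rescaled linear Poincar\'e flow. Beyond this point, the remaining steps, namely producing the spectrum-collapsing perturbation and lifting the splitting from the dense set of periodic orbits to all of $\Lambda_i \setminus \mathrm{Sing}(X)$, are essentially formal consequences of Corollary 2.22 of \cite{BonattiGourmelonVivier} and can be transcribed from \cite{AbdenurBonattiCrovisier} without further substantive modification.
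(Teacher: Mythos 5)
Your overall strategy---replace Bonatti--Gourmelon--Vivier Corollary~2.19 by its flow counterpart Corollary~2.22 applied to the linear Poincar\'e flow and then transcribe the Abdenur--Bonatti--Crovisier argument---is exactly the route the paper indicates (the paper does not spell out a proof beyond pointing to Corollary~2.22), and your dichotomy set-up with the open sets $\mathcal{U}_n$ and $V_n := \mathcal{U}_n \cup \mathrm{int}(\mathfrak{X}^1(M)\setminus\mathcal{U}_n)$ is correct.

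There is, however, a genuine gap in your identification of the pieces $\Lambda_i$. You assert that, once the number of sinks and sources is finite and locally constant, ``the non-wandering set decomposes into the finitely many sinks and sources together with the remaining chain recurrent classes, which are themselves in finite number by semicontinuity arguments at a continuity point of the chain recurrent decomposition.'' This is false: even for a $C^1$-generic system far from the Newhouse phenomenon there may be infinitely many chain recurrent classes consisting of saddles (wild examples of Bonatti--D\'\i az type already exhibit this for diffeomorphisms in dimension $\geq 3$). Upper semicontinuity of $X\mapsto\mathcal{CR}(X)$ and being a continuity point control the location of the chain recurrent set, not the cardinality of its decomposition into classes. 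The sets $\Lambda_1,\dots,\Lambda_{k_X}$ in the theorem are therefore \emph{not} chain recurrent classes; in Abdenur--Bonatti--Crovisier they are built by grouping the (possibly infinitely many) chain classes according to the type of dominated splitting they carry for the linear Poincar\'e flow, so that each $\Lambda_i$ is the closure of a union of chain classes sharing a compatible splitting signature, and $k_X$ is bounded in terms of $\dim M$ alone. This regrouping step is where the ``global'' in their ``global dominated splitting'' lives, and it is not a formality: without it you only get a dominated splitting class-by-class, not a finite decomposition of $\Omega(X)$.

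On the other hand, your worry about the singularities accumulated by $\Lambda_i$ is legitimate but less of an obstacle than you suggest, and the rescaled linear Poincar\'e flow is not needed here. Corollary~2.22 (applied to periodic orbits of the linear Poincar\'e flow) produces a domination constant that is uniform over all periodic orbits in each group, independently of how close they pass to the singular locus, and this uniformity lets the splitting pass to the limit on $\overline{\mathrm{Per}(X)\cap\Lambda_i}\setminus\mathrm{Sing}(X)$, which is precisely the set on which the statement asserts domination.
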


In \cite{BonattiCrovisierWilkinson}, the authors also introduce the notion of large derivative for diffeomorphisms (see section 2.3 in \cite{BonattiCrovisierWilkinson}). This is the key property to pass from triviality of the centralizer in an open and dense subset of $M$ to triviality in the entire manifold. For vector fields we introduce the following similar definition:

\begin{definition}[Large normal derivative (LND)]
\label{defi.lnd}
A vector field $X\in \mathfrak{X}^1(M)$ satisfies the LND property if for any $K>0$, there exists $T = T(K)>0$ such that for any $p\in M_X$ and $t> T$, there exists $s\in \R$ that verifies:
\[
\max \{\|P_t(X_s(p))\|, \|P_{-t}(X_{s+T}(p))\|\}>K.
\]
\end{definition}

If the chain-recurrent set admits a decomposition as in (\ref{eq.decomposition}), then the LND property holds. This was remarked for diffeomorphisms by Bonatti-Crovisier-Wilkinson (see remark 8 of Appendix A in \cite{BonattiCrovisierWilkinson}). The same holds in our context for flows, we make it precise in the following corollary:

\begin{corollary}
\label{cor.LNDfarsinks}
Let $\mathcal{R}_2\subset \mathfrak{X}^1(M)$ be the residual subset from theorem \ref{thm.abc}. If $X\in \mathcal{R}_2$ and $X$ does not have infinitely many sinks or sources, then $X$ has the LND property.
\end{corollary}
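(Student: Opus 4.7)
The plan is to argue by contradiction, exploiting the decomposition of the non-wandering set given by Theorem \ref{thm.abc}. Suppose that $X \in \mathcal{R}_2$ has only finitely many sinks and sources but fails the LND property. Then there exist $K_0 > 0$, sequences $T_n, t_n \to \infty$ with $t_n > T_n$, and points $p_n \in M_X$ such that, for every $s \in \R$,
\[
\|P_{t_n}(X_s(p_n))\| \le K_0 \quad \text{and} \quad \|P_{-t_n}(X_{s+T_n}(p_n))\| \le K_0.
\]
Since $s$ is arbitrary, this says that along the entire orbit of $p_n$ the linear Poincar\'e flow has operator norm bounded by $K_0$ in arbitrarily long time windows, both in forward and in backward time.

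Next I would invoke Theorem \ref{thm.abc}(1) to write $\Omega(X) = \mathrm{Sink}(X) \sqcup \mathrm{Source}(X) \sqcup \Lambda_1 \sqcup \cdots \sqcup \Lambda_{k_X}$, with each $\Lambda_i \setminus \mathrm{Sing}(X)$ admitting a dominated splitting $N^X = E_i \oplus F_i$ for $P_t$. Since $M$ is compact, the $\omega$- and $\alpha$-limit sets of every orbit are contained in $\Omega(X)$, so after extracting a subsequence and choosing appropriate parameters $s_n$, the orbits of $p_n$ accumulate on a fixed piece of the decomposition. I would then derive a contradiction in each of the three possible cases.

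If the accumulation happens on a sink $\gamma$, then on a small tubular neighborhood of $\gamma$ the Poincar\'e return map is uniformly contracting, so $\|P_{-t}\|$ grows exponentially in $t$ there; picking $s$ so that $X_{s+T_n}(p_n)$ lies in this neighborhood eventually yields $\|P_{-t_n}(X_{s+T_n}(p_n))\| > K_0$, contradicting the hypothesis. The source case is symmetric and contradicts the bound on $\|P_{t_n}(X_s(p_n))\|$. In the remaining case of accumulation on some $\Lambda_i$, iterating the dominated inequality gives the exponential decay
\[
\|P_{n\tau}|_{E_i}\| \cdot \|(P_{n\tau}|_{F_i})^{-1}\| < 2^{-n}
\]
uniformly on $\Lambda_i$, where $\tau > 0$ is the time constant from the domination; together with the uniform bound $\|P_{n\tau}|_{F_i}\| \le K_0$ inherited (by continuity) on the accumulation set, this forces $\|P_{n\tau}|_{E_i}\| \to 0$, hence $\|P_{-n\tau}\| \ge 1/\|P_{n\tau}|_{E_i}\| \to \infty$ there, again contradicting the $K_0$ bound.

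The main obstacle will be this third case, in which the blow-up has to be transferred from the limit set $\Lambda_i$ to actual orbit segments of $p_n$. The subtlety is that the modulus of continuity of $P_{-t_n}$ depends on $t_n$ and degrades as $t_n \to \infty$, so the required approximation scale changes with $n$. However, for each fixed $n$ the map $x \mapsto \|P_{-t_n}(x)\|$ is continuous on $M_X$, so once a point of $\Lambda_i$ with $\|P_{-t_n}\| \gg K_0$ has been identified, one can choose $s$ with $X_{s+T_n}(p_n)$ close enough to it that $\|P_{-t_n}(X_{s+T_n}(p_n))\| > K_0$, yielding the desired contradiction and completing the proof.
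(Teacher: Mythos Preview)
The paper does not actually prove this corollary: it observes that the diffeomorphism analogue is Remark~8 of Appendix~A in \cite{BonattiCrovisierWilkinson} and asserts that the same reasoning works for flows. So your proposal supplies more than the paper does, and your overall strategy---contradiction, accumulation on a piece of the decomposition~\eqref{eq.decomposition}, case analysis---is the natural one and is essentially what the cited remark does.

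There is, however, a genuine slip in your dominated-splitting case. You assert ``the uniform bound $\|P_{n\tau}|_{F_i}\|\le K_0$ inherited (by continuity) on the accumulation set'' and then let $n$ run over all integers. But the negation of LND only gives $\|P_{t_m}(X_s(p_m))\|\le K_0$ (and the analogous backward bound) for the \emph{particular} times $t_m$, not for every multiple of $\tau$; passing to $\omega(p_m)$ yields $\|P_{t_m}(q)\|\le K_0$ and $\|P_{-t_m}(q)\|\le K_0$ for $q\in\omega(p_m)$, still only at time $t_m$. The repair is to argue at that single large time: iterate the domination $\lfloor t_m/\tau\rfloor$ times to get $\|P_{t_m}|_{E_i}\|\cdot\|(P_{t_m}|_{F_i})^{-1}\|\le C\,2^{-t_m/\tau}$ at each $q\in\omega(p_m)\cap(\Lambda_i\setminus\mathrm{Sing}(X))$; then $\|P_{t_m}(q)|_{F_i}\|\le K_0$ forces $\|P_{t_m}(q)|_{E_i}\|\le CK_0\,2^{-t_m/\tau}$, whence $\|P_{-t_m}(X_{t_m}(q))\|\ge(CK_0)^{-1}2^{t_m/\tau}$. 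For $m$ large this exceeds $K_0$, and since $X_{t_m}(q)\in\omega(p_m)$ you already have the contradiction---the transfer back to the orbit of $p_m$ in your last paragraph is not even needed.

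One further caveat: the linear Poincar\'e flow is undefined at singularities, so if $\omega(p_m)$ happens to reduce to a singularity $\sigma$ lying in some $\Lambda_i$, the continuity argument breaks down there and must be replaced by a direct estimate near $\sigma$ (expansion along the unstable direction if $\sigma$ is a saddle, or the sink/source argument otherwise), as you already do in the periodic sink/source cases.
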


In the next statement, we summarize some other $C^1$-generic properties that we will use.
\begin{theorem}
\label{thm.residualkupkabc}
There exists $\mathcal{R}_3\subset \mathfrak{X}^1(M)$ a residual subset such that any $X\in \mathcal{R}_3$ verifies:
\begin{itemize}
\item every periodic orbit, and singularity, is hyperbolic (Kupka-Smale);
\item two distinct periodic orbits have different periods;
\item any connected component $O$ of the interior of $\Omega(X)$ is contained in the closure of the stable manifold of a periodic point (Bonatti-Crovisier, \cite{BonattiCrovisier}).
\end{itemize}
\end{theorem}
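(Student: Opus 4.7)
The plan is to write $\mathcal{R}_3$ as the intersection of three residual subsets, one per bullet. Since a countable intersection of residual sets is still residual (Baire), it suffices to establish genericity of each property separately. For the first bullet I would invoke the Kupka--Smale theorem for vector fields, which is classical and guarantees that the set of $X\in\mathfrak{X}^1(M)$ all of whose singularities and periodic orbits are hyperbolic is residual. For the third bullet, this is literally the statement of the Bonatti--Crovisier theorem cited in \cite{BonattiCrovisier}, so I would just quote it.

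The only bullet requiring an independent argument is the second one. I would work inside the Kupka--Smale residual set, where periodic orbits are isolated and depend continuously on the vector field (in the $C^1$ topology, locally, together with their periods). For each integer $k\geq 1$, let
\[
\mathcal{U}_k := \bigl\{X\in\mathfrak{X}^1(M) : \text{any two distinct periodic orbits of $X$ of period} \leq k \text{ have distinct periods}\bigr\}.
\]
Openness of $\mathcal{U}_k$ inside the Kupka--Smale set follows because hyperbolic periodic orbits of period at most $k$ persist under small $C^1$ perturbations, are finite in number, and their periods vary continuously with $X$. For density, suppose $X$ is Kupka--Smale but two distinct hyperbolic periodic orbits $\gamma_1,\gamma_2$ of $X$ with period $\leq k$ share the same period. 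Choose a small flowbox $U$ intersecting $\gamma_1$ but disjoint from $\gamma_2$ and from all other periodic orbits of period $\leq k$ (a finite collection). Replacing $X$ by $(1+\varepsilon\varphi)X$ for a bump function $\varphi$ supported in $U$ rescales time only along trajectories passing through $U$; the period of $\gamma_1$ changes smoothly with $\varepsilon$ while the periods of the other finitely many short orbits are unaffected. An arbitrarily small choice of $\varepsilon$ then separates the periods. Iterating pairwise and intersecting over all pairs in the finite list yields a perturbation $X'$ arbitrarily $C^1$-close to $X$ inside $\mathcal{U}_k$. Hence $\mathcal{U}_k$ is open and dense in the Kupka--Smale set, and therefore in $\mathfrak{X}^1(M)$; intersecting over $k\in\N$ gives a residual set where no two distinct periodic orbits share a period.

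The main (mild) obstacle is the density argument for the second bullet: one must ensure the localized time-rescaling affects the period of exactly one of the conflicting orbits. Choosing the flowbox small enough so that only $\gamma_1$ enters it, which is possible because the remaining relevant periodic orbits form a finite set disjoint from $\gamma_1$, handles this. Setting $\mathcal{R}_3 := \mathcal{R}_{KS} \cap \bigcap_{k\in\N} \mathcal{U}_k \cap \mathcal{R}_{BC}$ then yields the desired residual subset.
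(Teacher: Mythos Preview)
Your proposal is correct and matches the paper's treatment: the paper states Theorem~\ref{thm.residualkupkabc} without proof, simply collecting three well-known $C^1$-generic facts (Kupka--Smale, Bonatti--Crovisier, and the folklore period-separation), exactly as you outline. Your explicit perturbation argument for the second bullet via a localized time-reparametrization $(1+\varepsilon\varphi)X$ is the standard one; the only cosmetic point is that $\mathcal{U}_k$ need not be open in all of $\mathfrak{X}^1(M)$, but since $\mathcal{U}_k\cap\mathcal{R}_{KS}$ is relatively open in the dense $G_\delta$ set $\mathcal{R}_{KS}$, it is itself a dense $G_\delta$, and the countable intersection is residual as you claim.
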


\section{Proof of Proposition \ref{prop.mainpropextension}}
\label{sec.proofprop}
Throughout this section we fix a vector field $X\in \mathfrak{X}^1(M)$ whose periodic orbits and singularities are all hyperbolic. We also fix $f\in \Z^1(X)$ a $C^1$-diffeomorphism in the centralizer of $X$ that verifies the conditions of proposition \ref{prop.mainpropextension}. The goal of this section is to construct an $X$-invariant continuous function $\tau:M \to \R$ such that $f(.) = X_{\tau(.)}(.)$. 

\subsection{Non-critical points}
In this subsection we construct the function $\tau$ for non-critical points. This is given in the following lemma:
\begin{lemma}
\label{lemma.functionnoncriticalpoints}
There exists an $X$-invariant continuous function $\tau_1:M-Crit(X) \to \R$ such that $f|_{M-Crit(X)}(.) = X_{\tau_1(.)}(.)$. 
\end{lemma}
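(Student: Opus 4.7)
The plan is to define $\tau_1$ directly from the hypothesis on $f$, and then verify continuity by a compactness argument. For $p\in M-\mathrm{Crit}(X)$, the point $p$ is neither a singularity nor a periodic point, so the orbit map $t\mapsto X_t(p)$ is injective on $\R$. The hypothesis of Proposition \ref{prop.mainpropextension} provides some $t\in[-T,T]$ with $f(p)=X_t(p)$, and injectivity forces this $t$ to be unique; I take $\tau_1(p)$ to be this unique value. This produces a well-defined map $\tau_1\colon M-\mathrm{Crit}(X)\to[-T,T]$ satisfying $f(p)=X_{\tau_1(p)}(p)$.

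The flow-invariance $\tau_1\circ X_s = \tau_1$ is an immediate consequence of the commutation $f\circ X_s=X_s\circ f$: the point $X_s(p)$ is again non-critical, and the identity
\[
X_{\tau_1(X_s(p))}(X_s(p))\;=\;f(X_s(p))\;=\;X_s(f(p))\;=\;X_{\tau_1(p)}(X_s(p))
\]
combined with injectivity of the orbit through $X_s(p)$ yields $\tau_1(X_s(p))=\tau_1(p)$.

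For continuity at $p$, I would take a sequence $p_n\to p$ in $M-\mathrm{Crit}(X)$, use compactness of $[-T,T]$ to extract a subsequence with $\tau_1(p_{n_k})\to t_\infty$, and pass to the limit in the identity $f(p_{n_k})=X_{\tau_1(p_{n_k})}(p_{n_k})$ using joint continuity of $(t,q)\mapsto X_t(q)$ together with continuity of $f$. This gives $f(p)=X_{t_\infty}(p)$, which combined with $f(p)=X_{\tau_1(p)}(p)$ and injectivity of the orbit of the non-critical point $p$ forces $t_\infty=\tau_1(p)$. Since every subsequence of $(\tau_1(p_n))_n$ admits a sub-subsequence converging to $\tau_1(p)$, the full sequence converges and $\tau_1$ is continuous at $p$.

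There is no real obstacle in this lemma: the statement reduces entirely to (a) injectivity of the orbit map at non-critical points, which renders $\tau_1$ well-defined, and (b) compactness of $[-T,T]$, which turns the uniform time bound into continuity. The genuine difficulty in Proposition \ref{prop.mainpropextension} lies in extending $\tau_1$ continuously across $\mathrm{Crit}(X)$, where the parametrization of orbits by time is no longer injective (periodicity causes a modular ambiguity, and singularities collapse the time variable altogether); it is there that the assumption of hyperbolicity of all periodic orbits and singularities will be essential, presumably via a local linearization analysis near each critical element.
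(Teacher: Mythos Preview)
Your proof is correct and follows essentially the same approach as the paper: define $\tau_1$ via injectivity of the orbit map at non-critical points, verify $X$-invariance from the commutation relation, then establish continuity. The only difference is in the continuity step: the paper builds a flow box $\Psi\colon(-T-1,T+1)\times\mathcal{N}(p,\delta)\to M$ around $p$ and reads off $\tau_1$ as the first coordinate of $\Psi^{-1}\circ f$, whereas you use sequential compactness of $[-T,T]$ together with injectivity of the orbit of $p$ to pin down the limit. Both arguments are standard and equally short; your version is marginally more direct since it avoids setting up local coordinates, while the paper's flow-box formulation has the small advantage that it immediately shows $\tau_1$ is $C^1$ (not just continuous) on $M-\mathrm{Crit}(X)$, a fact exploited later in the paper (e.g.\ in Lemma~\ref{lem.opendensereparametrization}).
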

\begin{proof}
Let $p\in M-Crit(X)$. Since $p$ is a non-critical point and $f$ fixes its orbit, there is an unique $T_p\in \R$ such that $f(p) = X_{T_p}(p)$. We claim that for any $q\in orb(p)$ we have $f(q) = X_{T_p}(q)$. 

Indeed, let $q\in orb(p)$ and let $s\in \R$ be such that $q=X_s(p)$. Hence 
\[
f(q) = f(X_s(p)) = X_s(f(p)) = X_s(X_{T_p}(p)) = X_{T_p}(X_s(p)) = X_{T_p}(q).
\]

Define $\tau_1(p) = T_p$ for any $p\in M-Crit(X)$.
\begin{claim}
\label{claim.1}
The function $\tau_1$ is continuous on $M-Crit(X)$.
\end{claim} 
\begin{proof}
Let $T>0$ be the constant that appears in the hypothesis of proposition \ref{prop.mainpropextension}, that is, for any $q\in M$, $f(q) \in X_{[-T,T]}(q)$. Fix $p\in M-Crit(X)$, we will prove that $\tau_1$ is continuous on $p$.

For each $\delta>0$, define $\mathcal{N}(p,\delta) := \exp_p(N(p,\delta))$, where $\exp_p$ is the exponential map on $p$ and $N(p,\delta)$ is the ball of radius $\delta$ inside $N(p) \subset T_pM$. For $\delta$ small enough, the following map is a $C^1$-diffeomorphism
\[
\begin{array}{rcl}
\Psi: (-T-1,T+1) \times \mathcal{N}(p,\delta) & \longrightarrow & M\\
(t,x) & \mapsto & X_t(x).
\end{array}
\]
Let $V= Im(\Psi)$. The pair $(\Psi^{-1}, V)$ is a flow box around the piece of orbit $X_{(-T-1,T+1)}(p)$. Let $(p_n)_{n\in \N}$ be a sequence of points contained in $\mathcal{N}(p,\delta) \cap M-Crit(X)$, which converges to $p$. Since $f(p_n) \in X_{[-T,T]}(p_n)$, we have that $\Psi^{-1}(f(p_n)) = (\tau_1(p_n),p_n)$. By the continuity of $f$, we obtain that $\displaystyle \lim_{n\to + \infty} \tau_1(p_n) = \tau_1(p)$ and $\tau_1$ is continuous on $M-Crit(X)$.
\end{proof}
This claim concludes the proof of lemma \ref{lemma.functionnoncriticalpoints}.
\end{proof}

\subsection{Periodic points}
In this subsection we prove that there exists a continuous extension of the function $\tau_1$ constructed in the previous section to the periodic points. We prove the following lemma:
\begin{lemma}
\label{lemma.functionperiodicpoints}
There exists an $X$-invariant continuous function $\tau_2: M-Sing(X) \to \R$ that verifies the following:
\begin{itemize}
\item $f|_{M-Sing(X)}(.) = X_{\tau_2(.)}(.)$;
\item $\tau_2|_{M-Crit(X)} = \tau_1$.
\end{itemize}
\end{lemma}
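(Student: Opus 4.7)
The plan is to extend $\tau_1$ continuously from $M-\mathrm{Crit}(X)$ onto the periodic orbits by identifying the unique limit of $\tau_1$ at each periodic point. First I would fix a periodic orbit $\gamma$ of period $\pi = \pi(\gamma) > 0$; since $f(p) \in X_{[-T,T]}(p) \subset orb(p)$ for every $p \in \gamma$, $f$ preserves $\gamma$, and combined with $f \circ X_t = X_t \circ f$ this forces $f|_\gamma$ to be a time-translation: there exists $s_0 = s_0(\gamma) \in \R/\pi\mathbb{Z}$ with $f(X_t(p)) = X_{t+s_0}(p)$ for all $p \in \gamma$. Then, for any sequence $(p_n) \subset M - \mathrm{Crit}(X)$ with $p_n \to p \in \gamma$, the values $\tau_1(p_n) \in [-T,T]$ are bounded and every subsequential limit $s^\ast$ satisfies $X_{s^\ast}(p) = f(p) = X_{s_0}(p)$ by continuity of $f$, so $s^\ast$ lies in the finite set $\mathcal{S}_\gamma := (s_0 + \pi\mathbb{Z}) \cap [-T,T]$.

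The crucial step is to show that $\tau_1(p_n)$ converges to a single $s^\ast_\gamma \in \mathcal{S}_\gamma$, independently of the sequence. I would proceed as follows. Pick $\varepsilon \in (0,\pi/2)$ so that the intervals $I_s := (s-\varepsilon, s+\varepsilon)$, $s \in \mathcal{S}_\gamma$, are pairwise disjoint. A compactness argument (if some $q_n \to q' \in \gamma$ had $\tau_1(q_n)$ bounded away from $\mathcal{S}_\gamma$, extraction would contradict the previous paragraph) yields a tubular neighborhood $U$ of $\gamma$ with $\tau_1(q) \in \bigcup_{s \in \mathcal{S}_\gamma} I_s$ for every $q \in U \cap (M - \mathrm{Crit}(X))$. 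By hyperbolicity, $\gamma$ is an isolated invariant set, so I can shrink $U$ further so that $U \cap \mathrm{Crit}(X) = \gamma$. For $d = \dim M \geq 3$ the set $V := U \setminus \gamma$ is diffeomorphic to $S^1 \times (D^{d-1} \setminus \{0\})$, which is connected; hence the continuous map $\tau_1|_V$, with values in the disjoint union $\bigsqcup_s I_s$, must land in a single interval $I_{s^\ast_\gamma}$. This forces $\tau_1(p_n) \to s^\ast_\gamma$ for every $(p_n)$ as above. (The case $d = 2$, where $V$ can have two components, needs extra care and can be handled by using the flow to link the two sides, combined with hyperbolicity.)

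Having obtained the unique limit $s^\ast_\gamma$, I would define $\tau_2 := \tau_1$ on $M - \mathrm{Crit}(X)$ and $\tau_2 \equiv s^\ast_\gamma$ on each periodic orbit $\gamma$. The resulting $\tau_2$ is $X$-invariant (constant along each $\gamma$, and invariant on $M - \mathrm{Crit}(X)$ by Lemma \ref{lemma.functionnoncriticalpoints}) and satisfies $f(p) = X_{\tau_2(p)}(p)$ on $M - \mathrm{Sing}(X)$, because on $\gamma$ the value $s^\ast_\gamma$ is a lift of $s_0(\gamma)$, so $X_{s^\ast_\gamma}(p) = X_{s_0(\gamma)}(p) = f(p)$. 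Continuity at periodic points follows from the unique-limit property for approaching sequences made of non-critical points; for sequences of points lying on nearby periodic orbits $\gamma_n \to \gamma$, a diagonal-extraction argument together with the same uniqueness gives $s^\ast_{\gamma_n} \to s^\ast_\gamma$. The hard part will be the uniqueness of the limit: when $\pi(\gamma) \leq 2T$ the set $\mathcal{S}_\gamma$ has several elements which could \emph{a priori} coexist as accumulation values, and one must crucially combine the continuity of $\tau_1$ with the isolation of $\gamma$ from other critical elements (given by hyperbolicity) to rule out this coexistence.
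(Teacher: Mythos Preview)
Your approach differs from the paper's. You argue topologically, using connectedness of $U\setminus\gamma$ in the ambient manifold to force $\tau_1$ into a single residue class modulo $\pi(\gamma)$. The paper instead works inside the invariant manifolds: it shows $\tau_1$ is constant along each strong stable leaf $W^{ss}(q)\setminus\{q\}$ (since $f$ carries strong stable leaves to strong stable leaves), and then, when $W^s(p)\setminus orb(p)$ has two components, uses the $C^1$ regularity of $f$ to match them: if $\tau_1$ took values $T_1\neq T_2$ there, the projected normal derivative $\Pi^s\, Df^s\,\Pi^s$ at $p$ would equal both $P^s_{T_1}(p)$ and $P^s_{T_2}(p)$, contradicting $\|P^s_{T_1}(p)\|\neq\|P^s_{T_2}(p)\|$ from hyperbolicity. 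For saddles, a $\lambda$-lemma argument then matches the constants on $W^s$ and $W^u$. Your route is pleasantly elementary for $d\geq 3$ and avoids stable manifolds and the $\lambda$-lemma entirely, but it has two genuine gaps.

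First, the assertion ``I can shrink $U$ further so that $U\cap\mathrm{Crit}(X)=\gamma$'' is false in general. Hyperbolicity makes $\gamma$ an isolated \emph{invariant} set (the maximal invariant set in $U$ is $\gamma$), but it does not stop long periodic orbits from merely passing through $U$; in any nontrivial hyperbolic basic piece, periodic orbits are dense, so $U\cap\mathrm{Per}(X)\supsetneq\gamma$ for every neighbourhood $U$. Thus $\tau_1$ is only defined on $V\setminus\mathrm{Per}(X)$, not on all of $V=U\setminus\gamma$. For $d\geq 3$ this is repairable (the countable union of periodic circles has codimension $\geq 2$ in $V$, so removing it keeps $V$ connected), but that argument needs to be made.

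Second, your treatment of $d=2$ does not work. The hint ``using the flow to link the two sides'' fails: the flow preserves each component of the annulus $U\setminus\gamma$. With only continuity of $f$, nothing rules out $\tau_1\equiv T_1$ on one side and $\tau_1\equiv T_1+\pi(\gamma)$ on the other; the resulting map is a homeomorphism agreeing with $f$ on $\gamma$. It is exactly here that the paper's $C^1$ derivative comparison is essential, and your argument, which never invokes the $C^1$ hypothesis on $f$, cannot close this case without it. ``Combined with hyperbolicity'' is not a plan: hyperbolicity of $\gamma$ alone, without differentiating $f$, gives no obstruction.
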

\begin{proof}
Let $p\in Per(X)$ be a hyperbolic periodic point and let $\pi(p)$ be the period of $p$. Notice that the equation $X_s(p) = f(p)$ has infinitely many solutions for $s\in \R$, thus the same strategy that we used for non-periodic points does not apply in this case. However, we will prove that the function $\tau_1$ is constant on $W^s(p)$ and $W^u(p)$. This will allow us to extend it to the periodic orbit. 

\begin{claim}
\label{claim.constantalongstablemanifoldperiodic}
The function $\tau_1$ is constant on $W^s(p)-orb(p)$ and $W^u(p) - orb(p)$. 
\end{claim} 
\begin{proof}
Recall that 
\[
W^s(p) = \displaystyle \bigcup_{t\in [0,\pi(p)]} W^{ss}(X_t(p))\]
and that the strong stable manifolds forms a foliation of the stable manifold, in particular, if $t,s \in [0, \pi(p))$ are such that $t\neq s$ then $W^{ss}(X_t(p)) \cap W^{ss}(X_s(p)) = \emptyset$.

 Observe that $W^s(p)-orb(p) \subset M-Crit(X)$, hence, the function $\tau_1$ is well defined on $W^s(p)- orb(p)$. Since the function $\tau_1$ is also $X$-invariant, it is enough to prove that $\tau_1$ is constant along $W^{ss}(p)-\{p\}$.

Since $f$ is a $C^1$-diffeomorphism that commutes with the flow, it is well known that $f(W^{ss}(p)) = W^{ss}(f(p))$. If $\tau_1$ was not constant along $W^{ss}(p)$ there would be two points $x,y \in W^{ss}(p)$ such that $0<|\tau_1(x)- \tau_1(y)|<\pi(p)$. Hence,
\[
f(x) = X_{\tau_1(x)}(x) \in W^{ss}(X_{\tau_1(x)}(p)) \textrm{ and } f(y) = X_{\tau_1(y)}(y) \in W^{ss}(X_{\tau_1(y)}(p)).
\]
However, $W^{ss}(X_{\tau_1(x)}(x)) \cap W^{ss}(X_{\tau_1(y)}(y))= \emptyset$. This implies that $f(W^{ss}(p))$ is not contained in any strong stable manifold, which is a contradiction with the fact that  $f(W^{ss}(p)) = W^{ss}(f(p))$. This proves that $f$ is constant on each connected component of $W^s(p) - orb(p)$. Observe that $W^s(p) - orb(p)$ has at most two connected components.

Since $f$ fixes the orbit of $p$, it induces a $C^1$-diffeomorphism $f^s$ on $W^s(p)$. If $\tau_1$ was not constant on $W^s(p)$, it would take two different values $T_1$ on the connected component $O_1$ and $T_2$ on the connected component $O_2$. From the above calculation, there exists $k\in \mathbb{Z}-\{0\}$ such that $T_2 = T_1 + k \pi(p)$. 

On $TW^s(p)$ we consider the normal bundle $N_s = \langle X \rangle^{\perp}$, for the riemannian metric induced by the metric of the manifold on $W^s(p)$. Let $\Pi^s: TW^s(p) \to N_s$ be the orthogonal projection on $N_s$ and let $P^s_t(.)$ be the linear Poincar\'e flow restricted to $W^s(p)$.

For any $q_1 \in O_1$ and $q_2 \in O_2$, we have the following formulas:
\[
\Pi^s(f^s(q_1))Df^s(q_1)\Pi^s(q_1) = P^s_{T_1}(q_1) \textrm{ and } \Pi^s(f^s(q_2))Df^s(q_2)\Pi^s(q_2) = P^s_{T_2}(q_2).
\]

Take $q_1^n$ a sequence in $O_1$ converging to $p$ and $q^n_2$ a sequence in $O_2$ converging to $p$. Since $f^s$ is $C^1$, we would have that
\[
\begin{array}{rcl}
\displaystyle \lim_{n\to + \infty} \Pi^s(f^s(q^n_1))Df^s(q^n_1)\Pi^s(q^n_1) & = &\\
\displaystyle \lim_{n\to + \infty} \Pi^s(f^s(q^n_2))Df^s(q^n_2)\Pi^s(q^n_2) &= &\Pi^s(f^s(p))Df^s(p)\Pi^s(p).
\end{array}
\]
However, since $p$ is a hyperbolic periodic point and $T_2 \neq T_1$, we have
\[
\|P^s_{T_1}(p)\| \neq \|P^s_{T_2}(p)\|.
\]
This is a contradiction and $\tau_1$ is constant on $W^s(p)$.
\end{proof}

Claim \ref{claim.constantalongstablemanifoldperiodic} implies that if $p$ is a periodic sink or source, then the function $\tau_2$ has a continuous extension to the orbit of $p$.

Let us assume that $p$ is a hyperbolic saddle. We remark that the following claim is independent of claim \ref{claim.constantalongstablemanifoldperiodic}.
\begin{claim}
\label{claim.saddle}
There is a constant $c_p\in \R$ such that $\tau_1|_{(W^s(p)-orb(p))} = \tau_1|_{(W^u(p)-orb(p))} = c_p$.
\end{claim}
\begin{proof}
The proof of this fact is the same as the proof of Proposition $4.4$ in \cite{LeguilObataSantiago}. For the sake of completeness we will repeat it here. 

Fix a point $p_s \in W^s(p) - orb(p)$. We will prove that for any point $p_u \in W^u(p)-orb(p)$, we have $\tau_1(p_u) = \tau_1(p_s)$. By the $X$-invariance of $\tau_1$, it is enough to consider $p_u \in W^u_{\mathrm{loc}}(\sigma)$. Let $(D^s_n)_{n\in \N}$ be a sequence of discs transverse to $W^u_{loc}(p)$ and with radius $\frac{1}{n}$. By the lambda-lemma (see \cite{PalisdeMelo} chapter 2.7), for each $n\in \N$ and for any disc $D^u$ transverse to $W^s_{loc}(p)$, there exists $t_n>0$ such that $X_{t_n}(D^u) \pitchfork D_n^s \neq \emptyset$. Since this holds for any disc $D^u$ and there are only countably many periodic orbits, for each $n\in \N$ we can find a disc $D^u_n$ centered in $p_s$ with radius smaller than $\frac{1}{n}$ and a point $q_n \in (X_{t_n}(D^u) \pitchfork D_n^s)$ which is non-periodic.

It is immediate that $q_n \to p_s$, as $n\to + \infty$. Since the function $\tau_1$ is continuous on $M_X- \mathrm{Crit}(X)$, we have that $\tau_1(q_n) \to \tau_1(p_s)$. We also have that $X_{t_n}(q_n) \to p_u$ as $n\to +\infty$. Hence, $\tau_1(X_{t_n}(q_n)) \to \tau_1(p_u)$. By the $X$-invariance of $\tau_1$,  we obtain
$$
\tau_1(p_s) = \displaystyle \lim_{n\to + \infty} \tau_1(q_n) = \lim_{n\to +\infty} \tau_1(X_{t_n}(q_n)) = \tau_1(p_u).
$$
This implies that for any $p_u \in W^u(p)-orb(p)$, we have $\tau_1(p_s) = \tau_1(p_u)$. Analogously, we can prove that for a fixed $p_u'\in W^u_{\mathrm{loc}}(p)-orb(p)$ and for any $p_s'\in W^s_{\mathrm{loc}}(p)-orb(p)$, it is verified $\tau_1(p_s') = \tau_1(p_u')$. We conclude that $\tau_1|_{W^s(p) - orb(p)} = \tau_1|_{W^u(p)- orb(p)} = c_p$, for some constant $c_p\in \R$. 
\end{proof}
From this claim, we can define an extension of $\tau_1$ to the set of periodic points by setting $\tau_2|_{orb(p)}:= c_p$, for $p\in Per(X)$. Let us prove that $\tau_2$ is continuous on $M-Sing(X)$.

Fix $p \in Per(X)$. Since $\tau_2$ is constant on $W^s(p)$, in the case that $p$ is a sink, it is immediate that $\tau_2$ is continuous on $p$. Similarly, we conclude continuity of $\tau_2$ on $p$ in the case that $p$ is a source. Suppose that $p$ is a saddle and let $(p_n)_{n\in \N}\subset M-Sing(X)$ be a sequence converging to $p$. 

Suppose first that the sequence $(p_n)_{n\in \N}$ is formed by non-periodic points, hence, by the continuity of $\tau_1$ and using the same argument as in the proof of claim \ref{claim.saddle}, we can conclude that $\displaystyle \lim_{n\to +\infty} \tau_2(p_n) = \tau_2(p)$.

In the case that $(p_n)_{n\in \N}$ is formed by periodic points, we may choose a sequence of points $(q_n)_{n\in \N} \subset M- Crit(X)$ such that each $n\in \N$, the point $q_n$ is contained in the stable manifold of $p_n$ and $d(q_n,p_n) < \frac{1}{n}$. Observe that $\lim_{n\to + \infty} q_n = p$. By claim \ref{claim.saddle}, we have that $\tau_2(q_n) = \tau_2(p_n)$, hence $\displaystyle \lim_{n\to + \infty} \tau_2(p_n) = \lim_{n\to + \infty} \tau_2(q_n) = \tau_2(p)$. We conclude that $\tau_2$ is continuous. \qedhere
\end{proof}
\subsection{Singularities}
We conclude the proof of proposition \ref{prop.mainpropextension}, by extending continuously the function $\tau_2$ from lemma \ref{lemma.functionperiodicpoints} to the singularities. This extension will give us the function $\tau$ that we wanted. We separate the proof into the case when the singularity is a saddle and when the singularity is a sink or source. This is given by the two lemmas below.

\begin{lemma}
Let $\sigma \in Sing(X)$ be a hyperbolic singularity which is a saddle. Then the function $\tau_2$ can be extended continuously to $\sigma$. 
\end{lemma}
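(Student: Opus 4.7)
The plan is to exploit the $C^1$-regularity of $f$ at $\sigma$ together with the hyperbolicity of $\sigma$ to pin down a single limit value $c^\ast = \lim_{p \to \sigma} \tau_2(p)$. Since $f$ preserves every orbit and $\{\sigma\}$ is itself an orbit, $f(\sigma) = \sigma$, so $L := Df(\sigma)$ commutes with $A := DX(\sigma)$ and hence with $e^{tA}$ for every $t \in \R$. Working in a chart centred at $\sigma$, the identity $f(p) = X_{\tau_2(p)}(p)$ combined with the Taylor expansions $f(p) - \sigma = L(p - \sigma) + o(|p - \sigma|)$ and $X_t(p) - \sigma = e^{tA}(p - \sigma) + o(|p - \sigma|)$ (the latter uniformly for $|t| \leq T$, which covers all values of $\tau_2(p)$) gives
\[
(L - e^{\tau_2(p) A})(p - \sigma) = o(|p - \sigma|) \qquad \text{as } p \to \sigma.
\]
For any sequence $p_n \to \sigma$ with $p_n \neq \sigma$, passing to a subsequence with $(p_n - \sigma)/|p_n - \sigma| \to u \in S^{d-1}$ and $\tau_2(p_n) \to c \in [-T,T]$ yields in the limit $Lu = e^{cA} u$.

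Hyperbolicity makes $c$ unique given $u$: since $A$ has no purely imaginary eigenvalues, $e^{tA} - I$ is invertible for every $t \neq 0$, so $e^{tA}$ admits no nonzero fixed vector for $t \neq 0$, and therefore $Lu = e^{cA} u$ has at most one solution $c = c(u)$. Taking $p_n = \sigma + u/n$ shows $c(u)$ is defined for every $u \in S^{d-1}$. I next specialise to $W^s(\sigma)$: $L$ preserves $E^s(\sigma)$ (commuting with $A$), and applying the preceding analysis to directions $u \in S^{d-1} \cap E^s(\sigma)$ reduces the question to the following linear-algebra statement: if $L|_{E^s}$ commutes with $A^s := A|_{E^s}$ and satisfies $L|_{E^s}\,u \in \{e^{tA^s}u : t \in \R\}$ for every $u \in E^s \setminus \{0\}$, then $L|_{E^s} = e^{c_s^\ast A^s}$ for a single $c_s^\ast \in \R$. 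This follows from the generalised-eigenspace decomposition of $A^s$ over $\C$ together with the invertibility of $e^{sA^s} - I$ for $s \neq 0$ (which forces the ``time'' $c(v_\la)$ associated to every nonzero $v_\la$ in any generalised eigenspace of $A^s$ to coincide). The $X$-invariance of $\tau_2$ and the fact that every orbit in $W^s(\sigma) \setminus \{\sigma\}$ accumulates on $\sigma$ then upgrade the pointwise limits to the identity $\tau_2 \equiv c_s^\ast$ on $W^s(\sigma) \setminus \{\sigma\}$; symmetrically, using the backward flow, $\tau_2 \equiv c_u^\ast$ on $W^u(\sigma) \setminus \{\sigma\}$ and $L|_{E^u} = e^{c_u^\ast A^u}$.

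To identify the two constants I would replay the $\lambda$-lemma argument of Claim~\ref{claim.saddle} at the saddle singularity $\sigma$: pick $p_s \in W^s(\sigma) \setminus \{\sigma\}$ and $p_u \in W^u_{\mathrm{loc}}(\sigma) \setminus \{\sigma\}$; via the $\lambda$-lemma at the hyperbolic saddle $\sigma$ produce non-periodic, non-critical points $q_n \to p_s$ and times $t_n > 0$ with $X_{t_n}(q_n) \to p_u$; the $X$-invariance of $\tau_2$ and its continuity on $M - \mathrm{Sing}(X)$ then give $c_s^\ast = \tau_2(p_s) = \lim \tau_2(q_n) = \lim \tau_2(X_{t_n}(q_n)) = \tau_2(p_u) = c_u^\ast$. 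Writing $c^\ast$ for this common value, the splitting $T_\sigma M = E^s \oplus E^u$ preserved by $L$ together with $L|_{E^s} = e^{c^\ast A^s}$ and $L|_{E^u} = e^{c^\ast A^u}$ assemble into $L = e^{c^\ast A}$. By uniqueness of $c(u)$ this forces $c(u) \equiv c^\ast$ on all of $S^{d-1}$, hence $\tau_2(p_n) \to c^\ast$ for every sequence $p_n \to \sigma$; setting $\tau(\sigma) := c^\ast$ furnishes the desired continuous extension. The main obstacle I expect is the rigidity lemma isolated above (commuting with a hyperbolic generator $A^s$ and preserving every orbit of $\{e^{tA^s}\}$ forces membership in that one-parameter group); verifying it demands care with complex-conjugate eigenvalue pairs and Jordan blocks of $A^s$, even though the key mechanism—the invertibility of $e^{sA^s}-I$ for $s\neq 0$—is elementary.
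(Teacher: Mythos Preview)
Your argument is correct, but it is considerably more elaborate than the paper's. The paper dispatches this lemma in one line: the $\lambda$-lemma argument of Claim~\ref{claim.saddle} applies verbatim at a hyperbolic saddle singularity, yielding directly that $\tau_2$ is constant on $(W^s(\sigma)\cup W^u(\sigma))\setminus\{\sigma\}$; continuity of the extension at $\sigma$ then follows exactly as in the periodic-saddle part of Lemma~\ref{lemma.functionperiodicpoints}. Notice that you already invoke precisely this argument when you ``replay the $\lambda$-lemma argument of Claim~\ref{claim.saddle}'' to match $c_s^\ast$ and $c_u^\ast$: that step is in fact the whole proof, and the linearisation and rigidity lemma surrounding it are unnecessary.

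That said, your route does buy something. Once your rigidity statement (if $L$ commutes with a hyperbolic $A$ and $Lu\in\{e^{tA}u:t\in\R\}$ for every $u\neq 0$, then $L=e^{c^\ast A}$ for some $c^\ast$) is established, it can be applied directly to $L=Df(\sigma)$ on all of $T_\sigma M$, not just on $E^s$ and $E^u$ separately. This would make even the $\lambda$-lemma step superfluous and give a purely infinitesimal argument, and it outputs the extra information $Df(\sigma)=e^{c^\ast DX(\sigma)}$. The price is proving the rigidity lemma itself, which---as you correctly flag---requires some bookkeeping with complex-conjugate eigenvalue pairs and nontrivial Jordan blocks; the paper's argument, by contrast, is entirely soft.
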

\begin{proof}
The proof of this lemma is the same as the proof of claim \ref{claim.saddle} (see also Proposition $4.4$ in \cite{LeguilObataSantiago}).
\end{proof}

\begin{lemma}
\label{lem.sinksing}
Let $\sigma\in Sing(X)$ be a singularity which is a sink. Then the function $\tau_2$ is constant on $W^s(\sigma)$, in particular, it can be extended continuously to $\sigma$. A similar statement holds if $\sigma$ is a source.
\end{lemma}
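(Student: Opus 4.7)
My plan is to pin down $Df(\sigma)$ via the commutation relation, then use $C^1$-regularity of $f$ together with attraction to $\sigma$ to force $\tau_2$ to extend continuously to $\sigma$ by a single value, and finally propagate constancy throughout $W^s(\sigma)$ by $X$-invariance.

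First I would note that $f(\sigma)=\sigma$: applying the hypothesis $f(p)\in X_{[-T,T]}(p)$ at $p=\sigma$ gives $f(\sigma)\in X_{[-T,T]}(\sigma)=\{\sigma\}$. Choose a smooth chart centered at $\sigma$ identifying a neighborhood of $\sigma$ with a ball $U$ around $0\in\R^d$, and set $A:=DX(\sigma)$ and $B:=Df(\sigma)$. Since $\sigma$ is a sink, every eigenvalue of $A$ has negative real part, so $A$ is hyperbolic and $e^{sA}-I$ is invertible for every $s\neq 0$. Differentiating $f\circ X_t=X_t\circ f$ at $\sigma$ yields $B\,e^{tA}=e^{tA}\,B$ for every $t\in\R$, hence $AB=BA$. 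For $p\in(U\cap W^s(\sigma))\setminus\{0\}$ close to $0$, the first-order expansion $f(p)=Bp+o(|p|)$ combined with the second-order expansion $X_t(x)=e^{tA}x+O(|x|^2)$ of the flow at $0$ (uniform for $t\in[-T,T]$ by compactness) applied to $f(p)=X_{\tau_2(p)}(p)$ give, after division by $|p|$,
\[
\bigl(B-e^{\tau_2(p)A}\bigr)\,\tfrac{p}{|p|}=o(1)\quad\text{as } p\to 0.
\]
For any unit vector $v\in\R^d$ the sequence $p_n:=v/n$ eventually lies in $U\cap W^s(\sigma)$ (the basin is an open neighborhood of $\sigma$); extracting a subsequence with $\tau_2(p_n)\to L_v\in[-T,T]$ yields the relation $Bv=e^{L_vA}v$.

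The heart of the proof is to upgrade these pointwise relations to the single identity $B=e^{LA}$. Assuming for simplicity that $A$ is diagonalizable (the Jordan-block case is analogous, matching coefficients against $e^{tN}$ for the nilpotent part), $AB=BA$ implies that $B$ preserves each eigenspace $E_i$ of $A$ for the eigenvalue $-\lambda_i$. For $v\in E_i$, $Av=-\lambda_i v$, so the pointwise identity reads $Bv=e^{-\lambda_iL_v}v$, exhibiting every $v\in E_i$ as an eigenvector of $B$. This forces $B|_{E_i}$ to act as a single scalar $\mu_i$; since $\mu_i=e^{-\lambda_iL_v}$ for every $v\in E_i$, the value $L_v$ is constant on $E_i$, call it $L_i\in[-T,T]$. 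Applying the identity to $v=v_1+v_2$ with $v_i$ in two distinct eigenspaces then gives $\mu_1=e^{-\lambda_1L_v}$ and $\mu_2=e^{-\lambda_2L_v}$ with a common $L_v$, forcing $L_1=L_2$. One thus obtains a single $L\in[-T,T]$ with $B|_{E_i}=e^{-\lambda_iL}\,I|_{E_i}$ for every $i$, i.e.\ $B=e^{LA}$.

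To finish, let $q_n\to\sigma$ be any sequence in $W^s(\sigma)\setminus\{\sigma\}$. Any subsequential limit $L'$ of $\tau_2(q_n)$, together with the corresponding limit direction $v$ of $q_n/|q_n|$, satisfies $e^{L'A}v=Bv=e^{LA}v$, so $e^{(L-L')A}v=v$; the hyperbolicity of $A$ (no eigenvalue of $e^{sA}$ equals $1$ for $s\neq 0$, since all eigenvalues of $A$ have negative real part) forces $L=L'$. Hence $\tau_2(q_n)\to L$ unconditionally, and $\tau_2$ extends continuously to $\sigma$ with value $L$. For any $p\in W^s(\sigma)\setminus\{\sigma\}$, the $X$-invariance of $\tau_2$ gives $\tau_2(p)=\tau_2(X_t(p))$ for all $t\in\R$; letting $t\to+\infty$ (so $X_t(p)\to\sigma$) and using continuity at $\sigma$ yields $\tau_2(p)=L$. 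Hence $\tau_2\equiv L$ on $W^s(\sigma)$. The source case follows by applying the same argument to the time-reversed vector field $-X$. The step I expect to be the main obstacle is the algebraic one in the third paragraph --- converting the family of pointwise identities $Bv=e^{L_vA}v$ into the single identity $B=e^{LA}$ --- for which one must combine $AB=BA$ with enough freedom in the choice of $v$ (both within and across eigenspaces of $A$) to rigidify $L_v$ into a constant.
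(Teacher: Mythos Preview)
Your argument is essentially correct but takes a genuinely different route from the paper. The paper argues by contradiction using a cardinality comparison of Lyapunov exponents: if $\tau_2$ were non-constant on $W^s(\sigma)$ it would take an interval $(a,b)$ of values; for each $t\in(a,b)$ one has $f^n(x_t)=X_{nt}(x_t)$, so the exponential contraction rate $\limsup_n \tfrac{1}{n}\log d(\sigma,f^n(x_t))$ must lie both in the continuously varying finite set $\{t\,\Re(\lambda_j)\}$ (from the flow) and in the fixed finite set $\{\log|\tilde\lambda_i|\}$ (from $Df(\sigma)$). Since the former sweeps out a continuum as $t$ ranges over $(a,b)$ while the latter is finite, some $t_0$ makes the two sets disjoint, a contradiction. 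Your approach instead identifies $Df(\sigma)$ explicitly as $e^{L\,DX(\sigma)}$ via first-order expansions and then propagates the value $L$ through $W^s(\sigma)$ by $X$-invariance. Your method is more constructive (it actually pins down the limiting value of $\tau_2$), while the paper's is shorter and completely sidesteps the algebraic case analysis you flag as the main obstacle.

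Two small points to tighten. First, since $X$ is only $C^1$, the expansion of the flow should be $X_t(x)=e^{tA}x+o(|x|)$ (uniformly for $t\in[-T,T]$ by joint continuity of $D_xX_t$), not $O(|x|^2)$; the weaker remainder is all your argument uses. Second, your reduction ``assuming $A$ diagonalizable, the Jordan-block case is analogous'' does go through but deserves a line: for a complex conjugate pair $a\pm bi$ with $a<0$, $B$ preserves the corresponding real $2$-plane and acts there as a rotation-scaling $\rho R_\theta$, and the relation $Bv=e^{L_vA}v$ forces $\rho=e^{aL_v}$, which already determines $L_v$ uniquely from $\rho$; for a Jordan block $\lambda I+N$, commutation forces $B$ to be a polynomial in $N$, and matching the top coefficients against $e^{L\lambda}e^{LN}$ again pins down $L$ uniquely. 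With these in place the cross-term step (taking $v=v_1+v_2$) rigidly gives a single $L$, and the rest of your argument (hyperbolicity of $A$ forcing $e^{(L-L')A}v=v\Rightarrow L=L'$, then $X$-invariance) is clean.
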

\begin{proof}

Suppose that $\tau_2$ is not constant on $W^s(\sigma)$, then there exists an open set $U \subset W^s(\sigma)$ such that $\tau_2 (U) = (a,b)$, where $a\neq b$. Suppose also that $(a,b) \subset (0, + \infty)$. For each $t \in (a,b)$ we fix a point $x_t \in U$ such that $\tau_2(x_t) = t$. Thus, $f(x_t) = X_t(x_t)$. By the $X$-invariance of the function $\tau_2$, we obtain that $f^n(x_t) = X_{nt}(x_t)$. 

Let $\lambda_1, \cdots, \lambda_l \in \mathbb{C}$ be the eigenvalues of $DX(\sigma)$. For each $j\in \{1, \cdots, l\}$, consider the number $c_j = \mathfrak{R}(\lambda_j)$, where $\mathfrak{R}(.)$ is the real part of a number. Since $\sigma$ is a hyperbolic sink for $X$, we have that $c_j<0$, for each $j=1, \cdots, l$. For each $t\in (a,b)$ the value 
\begin{equation}
\label{eq.defiht}
h_t := \displaystyle \limsup_{n\to +\infty} \frac{1}{n}\log d(\sigma, f^n(x_t)) = \limsup_{n\to +\infty} \frac{1}{n} \log d(\sigma, X_{nt}(x_t))
\end{equation}
belongs to the set $C_t:= \{ tc_1 , \cdots,tc_l\}$.

Since $\sigma$ is a fixed point of the $C^1$-diffeomorphisms $f$, we have that $Df(\sigma)$ has at most $d$ different eigenvalues $\tilde{\lambda}_1, \cdots, \tilde{\lambda}_{k} \in \mathbb{C}$, where $1\leq k \leq d$. For each $i\in \{1, \cdots, k\}$, let $a_i = \log|\tilde{\lambda}_i|$ and let $A=\{a_1, \cdots, a_k\}$. We remark that if $q\in M$ is a point such that $\displaystyle \lim_{n \to +\infty} f^n(q) = p$, then
\begin{equation}
\label{eq.speedf}
\displaystyle \limsup_{n\to +\infty} \frac{1}{n}\log d(f^n(q), p ) \in A.
\end{equation}

Notice that the sets $C_t$ varies continuously with $t\in (a,b)$. Observe also that the set $(a,b)$ is uncountable and the set $A$ is finite. Therefore, there exists $t_0\in (a,b)$ such that $C_{t_0} \cap A = \emptyset$. By (\ref{eq.defiht}) and (\ref{eq.speedf}) this is a contradiction, since
\[
h_{t_0} \in A \textrm{ and } h_{t_0} \in C_{t_0}.
\]
We conclude that $\tau_2$ is constant on $W^s(\sigma)$. If $(a,b) \subset (-\infty, 0)$ we can repeat this argument for $f^{-1}$ and we would obtain the same conclusion. This  implies that the function $\tau_2$ can be continuously extended to a function $\tau$ defined on $\sigma$.
\end{proof}

\section{Proof of Theorem \ref{thm.maintheorem}}
\label{sec.prooftheorem}
In this section we prove Theorem \ref{thm.maintheorem}. Let $\mathcal{R}_1$, $\mathcal{R}_2$ and $\mathcal{R}_3$ be the residual subsets given by theorems \ref{thm.undundsgeneric}, \ref{thm.abc} and \ref{thm.residualkupkabc}, respectively. Consider the residual set $\mathcal{R} = \mathcal{R}_1 \cap \mathcal{R}_2 \cap \mathcal{R}_3$. We claim that $\mathcal{R}$ is the residual subset that verifies the conditions of Theorem \ref{thm.maintheorem}.

From now on we fix $X\in \mathcal{R}$ such that $X$ does not have infinitely many sinks or sources, and we fix $f\in \Z^1(X)$. Since $X$ is fixed, we will denote $\Pi_X$, and $N^X$, by $\Pi$, and $N$. 

We want to prove that $f$ verifies the conditions of \ref{prop.mainpropextension}. Let $\mathcal{D}$ and $\mathcal{D}^s$ be the subsets given by the UND and $\mathrm{UND}^s$ properties. 

\begin{lemma}
\label{lem.fixesDDs}
For each $x\in \mathcal{D} \cup \mathcal{D}^s$, we have that $f(orb(x)) = orb(x)$. 
\end{lemma}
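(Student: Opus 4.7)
The plan is to argue by contradiction: assume some $x \in \mathcal{D} \cup \mathcal{D}^s$ satisfies $f(x) \notin \mathrm{orb}(x)$, and derive a contradiction from the UND or $\mathrm{UND}^s$ property. The starting point is that $f \circ X_t = X_t \circ f$ gives, by differentiating at $t=0$, the identity $Df(x)\cdot X(x) = X(f(x))$, and more generally
\[
DX_t(f(x)) = Df(X_t(x)) \circ DX_t(x) \circ Df(x)^{-1}.
\]
In particular $f$ preserves $M_X$, the flow direction, and the sets $\Omega(X)$ and $\mathcal{CR}(X)$.

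The main calculation is the uniform bound
\[
\bigl|\log\det P_t(f(x)) - \log\det P_t(x)\bigr| \le K_0, \qquad t \in \R,
\]
for some constant $K_0$ depending only on $f$ and $X$. To obtain it, I use the block-triangular form of $DX_t(z)$ in the orthogonal splitting $T_zM = \langle X(z)\rangle \oplus N^X(z)$, which yields $\det DX_t(z) = \tfrac{\|X(X_t(z))\|}{\|X(z)\|}\det P_t(z)$. Combined with the conjugacy displayed above, the difference $\log\det P_t(f(x)) - \log\det P_t(x)$ splits into three pieces: one of the form $\log|\det Df(X_t(x))| - \log|\det Df(x)|$, bounded by $2\sup_M|\log|\det Df||$; and two terms of the form $\log\|X(f(z))\| - \log\|X(z)\|$ evaluated at $z=x$ and at $z=X_t(x)$, each controlled by $\max(\log\|Df\|_\infty,\log\|Df^{-1}\|_\infty)$ via the relation $Df\cdot X = X\circ f$. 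All three quantities are finite by compactness of $M$.

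For $x \in \mathcal{D} \subset M-\mathcal{CR}(X)$, $f$-invariance of $\Omega(X)$ gives $f(x) \in M-\Omega(X)$; if furthermore $f(x)\notin\mathrm{orb}(x)$, the UND property supplies some $n>0$ with $|\log\det P_n(x) - \log\det P_n(f(x))| > K_0 + 1$, contradicting the uniform bound. Hence $f(x) \in \mathrm{orb}(x)$ and $f(\mathrm{orb}(x)) = \mathrm{orb}(x)$.

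For $x\in\mathcal{D}^s_p$ with $p\in\mathrm{Per}(X)$, I first verify that $f$ preserves $W^s(p)$: the identities $X_{\pi(p)}(f(p)) = f(X_{\pi(p)}(p)) = f(p)$ and the symmetric statement applied to $f^{-1}$ show that $f(p)$ is periodic with the same minimal period as $p$, and the second item of Theorem~\ref{thm.residualkupkabc} then forces $\mathrm{orb}(f(p)) = \mathrm{orb}(p)$; consequently $f(W^s(p)) = W^s(p)$ and $f(x) \in W^s(p)$. Repeating the previous computation with the linear Poincar\'e flow restricted to the $f$-invariant subbundle $TW^s(p)$ yields an analogous uniform bound --- the contribution of $\log|\det Df|_{TW^s(p)}|$ stays bounded because the determinant of $Df$ restricted to a plane is a continuous function on the Grassmannian bundle of $\dim W^s(p)$-planes over the compact manifold $M$ --- and the $\mathrm{UND}^s$ property then closes the argument in the same way. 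The only substantive step is thus the uniform distortion bound; once it is established, the UND and $\mathrm{UND}^s$ properties directly exclude $f(x) \notin \mathrm{orb}(x)$ and the lemma follows.
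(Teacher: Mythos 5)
Your proof is correct and follows essentially the same strategy as the paper: establish a uniform bound on $|\log|\det P_t(f(x))| - \log|\det P_t(x)||$ from the conjugacy $DX_t(f(x)) = Df(X_t(x))\circ DX_t(x)\circ Df(x)^{-1}$, then invoke UND (resp.\ $\mathrm{UND}^s$ after showing $f$ fixes each $W^s(p)$ via the distinct-periods genericity). The only cosmetic difference is in deriving the bound: the paper projects $Df$ and $Df^{-1}$ directly onto the normal bundle and factors $|\det P_t(f(x))| = A_t\,|\det P_t(x)|\,C_t$, while you pass through $\det DX_t$ via the block-triangular identity $\det DX_t(z) = \tfrac{\|X(X_t(z))\|}{\|X(z)\|}\det P_t(z)$ and bound $\det Df$ and the $\|X\|$-ratios separately; both are equivalent bookkeeping.
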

\begin{proof}
Fix $x\in \mathcal{D}$. Since $f$ commutes with the flow $X_t$, it is easy to see that $M-\Omega(X)$ is an $f$-invariant set.

Notice that $DX_t(f(x)) = Df(X_t(x)) DX_t(x) Df^{-1}(f(x))$. Since the vector field direction, $\langle X \rangle$, is invariant by $Df$, $Df^{-1}$ and $DX_t$, we have that
\[\arraycolsep=1.2pt\def\arraystretch{1.6}
\begin{array}{l}
|\det P_t(f(x))| = |\det \Pi(X_t(f(x))) DX_t(f(x)) |_{N(f(x))}| =\\
|\det\Pi(X_t(f(x))) Df(X_t(x)) DX_t(x) Df^{-1}(f(x))|_{N(f(x))}|=\\
|\det \left(\Pi(X_t(f(x))) Df(X_t(x))\right) \circ  \left(\Pi(X_t(x)) DX_t(x)\right) \circ \left(\Pi(x) Df^{-1}(f(x))\right)|_{N(f(x))}|=\\
|\det \Pi(X_t(f(x))) Df(X_t(x))|_{N(X_t(x))}|.|\det \Pi(X_t(x)) DX_t(x)|_{N(x)}|.\\
. |\det \Pi(x) Df^{-1}(f(x))|_{N(f(x))}| = A_t.|\det P_t(x)|.C_t. 
\end{array}
\] 
Since $f$ is a $C^1$-diffeomorphism and its derivative preserves the vector field direction, there exists a constant $\tilde{K}>1$ such that 
\[
\tilde{K}^{-1} \leq \min\{A_t,C_t\} \leq \max \{A_t,C_t\} \leq \tilde{K}, \forall t\in \R.
\]
Therefore, for every $t\in \R$ we have that
\begin{equation}
\label{eq.provelemma}
|\log |\det P_t(x)| - \log |\det P_t(f(x))|| \leq 2\log \tilde{K}.
\end{equation}

Take $K > 2\log \tilde{K}$. If $f(x)$ did not belong to the orbit of $x$, by the UND property there would be $T\in \R$ such that 
\[
\displaystyle | \log \det P_T(x) - \log \det P_T(f(x))| > K. 
\]
This is a contradiction with (\ref{eq.provelemma}). Hence, $f$ fixes the orbit of $x$. 

Let $x\in \mathcal{D}^s$. Since $f$ commutes with the flow, it takes periodic orbits into periodic orbits of the same period. For a generic vector field, any two distinct periodic orbits have different periods. We conclude that $f$ fixes each periodic orbit. Since $f$ also takes stable manifolds into stable manifolds, we obtain that $f$ fixes stable manifolds. Therefore, we can apply the same calculations made before, restricting the jacobian to the stable manifolds and this will imply that $f$ fixes the orbit of $x$. The result then follows.
\end{proof}

This lemma states that the UND and $\mathrm{UND}^s$ properties provide some type of ``local'' triviality of the centralizer, meaning that $f$ fixes a dense set of orbits. Now we want to be able to extend this to the entire manifold. For that, we will use the LND property. We will need the following lemma to work with points in $M-\mathcal{CR}(X)$:

\begin{lemma}
\label{lem.opendensereparametrization}
There exists an open set $V\subset M - \mathcal{CR}(X)$, which is dense in $M - \mathcal{CR}(X)$, and a $C^1$-function $\tau:V\to \R$ such that $f(.) = X_{\tau(.)}(.)$ on $W$.
\end{lemma}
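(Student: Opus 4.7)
The strategy is to bootstrap from the dense set $\mathcal{D}\subset M-\mathcal{CR}(X)$ supplied by the UND property, on which Lemma~\ref{lem.fixesDDs} tells us that $f$ preserves orbits, to a $C^1$ reparametrization on an open dense subset of $M-\mathcal{CR}(X)$. For each $x\in \mathcal{D}$, since $x$ is neither periodic nor singular (being in $M-\mathcal{CR}(X)$), there is a unique $\tau(x)\in \R$ with $f(x)=X_{\tau(x)}(x)$. My plan is to extend $\tau$ to a $C^1$ function on an open flow-box neighborhood of each such $x$, and then glue these local extensions together.

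Given $x\in \mathcal{D}$, fix $T>|\tau(x)|$. Non-periodicity of $x$ ensures that the orbit arc $X_{[-T-1,T+1]}(x)$ is an embedded interval, so the tubular neighborhood theorem provides a transversal disc $\Sigma_x\subset M-\mathcal{CR}(X)$ through $x$ (we may shrink, since $M-\mathcal{CR}(X)$ is open) and a flow-box diffeomorphism
\[
\Psi\colon (-T-1,T+1)\times \Sigma_x \longrightarrow V_x, \qquad \Psi(t,y)=X_t(y).
\]
Since $f(x)=X_{\tau(x)}(x)\in V_x$, continuity of $f$ yields a smaller open flow-box neighborhood $U_x=\Psi((-\delta,\delta)\times \Sigma_x')\subset V_x$ of $x$ with $f(U_x)\subset V_x$. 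The commutation $f\circ X_s=X_s\circ f$ forces the conjugate map $F:=\Psi^{-1}\circ f|_{U_x}$ to take the $C^1$ skew-product form
\[
F(t,y)=(t+\tilde\tau(y),\, g(y))
\]
for $C^1$ functions $\tilde\tau\colon \Sigma_x'\to \R$ and $g\colon \Sigma_x'\to \Sigma_x$, with $\tilde\tau(x)=\tau(x)$ and $g(x)=x$.

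The crux is to show $g\equiv \mathrm{Id}$ on $\Sigma_x'$, after which the relation $f(z)=X_{\tilde\tau(\pi(z))}(z)$ will hold on all of $U_x$, where $\pi$ is the flow-box projection onto $\Sigma_x'$. For every $z\in \mathcal{D}\cap U_x$, Lemma~\ref{lem.fixesDDs} gives $f(z)\in orb(z)$; in flow-box coordinates, injectivity of $\Psi$ then forces $g(\pi(z))=\pi(z)$. The main obstacle is that $\mathcal{D}$ is only asserted dense in the $d$-dimensional set $M-\mathcal{CR}(X)$, and need not be dense on the $(d-1)$-dimensional transversal. I resolve this via the openness of the flow-box projection $\pi$: for any open $W\subset \Sigma_x'$, the preimage $\pi^{-1}(W)\cap U_x$ is open in $M-\mathcal{CR}(X)$ and hence meets $\mathcal{D}$, so $\pi(\mathcal{D}\cap U_x)$ is dense in $\Sigma_x'$. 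Continuity of $g$ then upgrades $g=\mathrm{Id}$ from this dense set to all of $\Sigma_x'$.

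Finally, set $V:=\bigcup_{x\in \mathcal{D}} U_x$. Since $V\supset \mathcal{D}$ and $\mathcal{D}$ is dense in $M-\mathcal{CR}(X)$, the open set $V$ is dense in $M-\mathcal{CR}(X)$. On each $U_x$ the formula $\tau(z):=\tilde\tau(\pi(z))$ defines a $C^1$ reparametrization, and two such local expressions must coincide on overlaps $U_{x_1}\cap U_{x_2}$ because $V$ contains no periodic orbits nor singularities, so $f(z)=X_t(z)$ admits a unique solution $t\in \R$ for every $z\in V$. This produces the desired globally well-defined $C^1$ function $\tau\colon V\to \R$ satisfying $f(\cdot)=X_{\tau(\cdot)}(\cdot)$ on $V$.
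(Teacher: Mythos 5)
Your overall architecture is the same as the paper's: build a flow-box around each point $p$ of the UND dense set $\mathcal{D}$, use density of $\mathcal{D}$ and continuity of $f$ to propagate orbit-preservation from $\mathcal{D}$ to the whole box, read off a local $C^1$ time-function from the flow-box chart, and glue by uniqueness on $M-\mathcal{CR}(X)$. The difficulty both proofs must face is the same and it is exactly at the step you dispatch in one line: ``in flow-box coordinates, injectivity of $\Psi$ then forces $g(\pi(z))=\pi(z)$.'' This does not follow. Injectivity of $\Psi$ on $(-T-1,T+1)\times\Sigma_x$ only tells you that the orbit of $\pi(z)$ does not re-hit $\Sigma_x$ within time $T+1$; it does \emph{not} prevent $orb(z)\cap V_x$ from being a disjoint union of several arcs corresponding to later (or earlier) transits, and a priori $f(z)$ could lie on one of those other arcs, giving $g(\pi(z))=X_a(\pi(z))$ for some $|a|\geq T+1$. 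Nothing in your setup bounds the time shift of $f$ on $U_x$ before you know $g\equiv\mathrm{Id}$, so the reasoning is circular as written.

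The paper closes this gap with a Conley-filtration argument: it chooses (by Conley theory) an open $U$ with $X_1(\overline U)\subset U$ and $p\in U-X_1(\overline U)$, locates the unique level $X_{n-1}(U)-X_n(\overline U)$ containing $f(p)$, and uses continuity of $f$ to make $f$ send a small transversal into that same level. Since a pseudo-filtration level can be crossed only once along any orbit, this pins down which arc of $orb(q)\cap V_p$ contains $f(q)$ and yields a uniform bound $|T_q|<2|T_p|$. Your argument can in fact be repaired in a similar spirit without invoking Conley theory explicitly: if $g(\pi(z_n))=X_{a_n}(\pi(z_n))\neq\pi(z_n)$ along a sequence $z_n\to x$ in $\mathcal{D}\cap U_x$, then $|a_n|\geq T+1$, and both $\pi(z_n)\to x$ and $X_{a_n}(\pi(z_n))=g(\pi(z_n))\to g(x)=x$; a bounded subsequence of $a_n$ would make $x$ periodic, while $a_n\to\pm\infty$ would make $x$ chain-recurrent, both contradicting $x\in\mathcal{D}\subset M-\mathcal{CR}(X)$. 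You would then also need to shrink $U_x$ so that $g\equiv\mathrm{Id}$ on all of $\pi(U_x)$, not merely near $x$. As submitted, however, this is a genuine missing step, and it is precisely the one that uses the hypothesis $x\notin\mathcal{CR}(X)$ in an essential way.
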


\begin{proof}
For each $x\in \mathcal{D}$, there is an unique number $T_x \in \R$ such that $f(x) = X_{T_x}(x)$. Let $p\in \mathcal{D}$. By Conley's theory, there is an open set $U$ such that $X_1(\overline{U}) \subset U$ and $p\in U- X_1(\overline{U})$ (see chapter 4 in \cite{AlongiNelson}). Since $f$ fixes the orbit of $p$, there is an unique $n\in \mathbb{Z}$ such that $f(p) \in X_{n-1}(U) - X_n(\overline{U})$.

Recall that for each $\delta>0$, we defined $\mathcal{N}(p,\delta) := \exp_p(N(p,\delta))$. Consider the map $\Psi(t,x) = X_t(x)$ defined on $(-2|T_p|,2|T_p|) \times \mathcal{N}(p,\delta)$. For $\delta$ small enough $\Psi$ is a $C^1$-diffeomorphism and $f(\mathcal{N}(p,\delta)) \subset X_{n-1}(U) - X_n(\overline{U})$. This implies that for each $q\in \mathcal{N}(p,\delta) \cap \mathcal{D}$, we have that $f(q) \subset \Psi((-2|T_p|,2|T_p|), q)$. Let $V_p= Im(\Psi)$.

Since $\mathcal{D}$ is dense, for each point $z\in \mathcal{N}(p,\delta)$, we can take a sequence $(z_n)_{n\in \N}$ contained in $\mathcal{N}(p,\delta) \cap \mathcal{D}$ and converging to $z$. Let $I_p = [-2|T_p|,2|T_p|]$, By the continuity of the flow, we have
\[
X_{I_p}(z_n) \xrightarrow{n\to + \infty} X_{I_p}(z).
\]
Since $f(z_n) \in X_{I_p}(z_n)$ and $\displaystyle \lim_{n\to +\infty} f(z_n)= f(z)$, we conclude that $f(z) \in X_{I_p}(z)$. In particular $f$ fixes the orbit of $z$. Therefore, for each $z\in \mathcal{N}(p,\delta)$, there is a number $T_z\in \R$ such that $f(z) = X_{T_z}(z)$. For each $x\in V_p$, consider the function $\hat{\tau}(x) = T_x$. Since $f$ and $\Psi^{-1}$ are $C^1$, we have that $\Psi^{-1} \circ f$ is also $C^1$. For any $z\in \mathcal{N}(p,\delta)$, we have $\Psi^{-1}(f(z))= (\hat{\tau}(z), z)$. From this formula, since $\hat{\tau}$ is constant along orbits and $\Psi^{-1} \circ f$ is $C^1$, we conclude that $\hat{\tau}$ is $C^1$ on $V_p$.

Take $V = \displaystyle \bigcup_{p\in \mathcal{D}} V_p$. Observe that $\hat{\tau}$ is uniquely defined on $V$ (since $f$ fixes orbits of $V$ and its points are non critical) and it is a $C^1$-function such that $f(.) = X_{\hat{\tau}(.)}(.)$ on $V$.  
\end{proof}

To deal with points in $int(\Omega(X))$ we need the following lemma:

\begin{lemma}
\label{lem.fixingorbitsstablemanifolds}
For each hyperbolic periodic point $p\in \mathrm{Per}(X)$, there exists a number $T_p\in \R$ such that for any $q\in W^s(p)$ it is verified that $f(q) = X_{T_p}(q)$.
\end{lemma}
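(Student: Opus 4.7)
The plan is to combine the orbit-fixing result from Lemma \ref{lem.fixesDDs} on the dense set $\mathcal{D}^s_p \subset W^s(p)$ with a flow-box extension argument as in Lemma \ref{lem.opendensereparametrization} to first produce a continuous reparametrization function on all of $W^s(p) \setminus \mathrm{orb}(p)$, and then to show it is globally constant by repeating the strong-stable foliation argument from Claim \ref{claim.constantalongstablemanifoldperiodic}.

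First, by Lemma \ref{lem.fixesDDs} applied to points of $\mathcal{D}^s_p$, the diffeomorphism $f$ fixes every orbit passing through $\mathcal{D}^s_p$. Since $\mathcal{D}^s_p$ is dense in $W^s(p)$ and a generic $X$ has two distinct periodic orbits with different periods (so $f$ fixes $\mathrm{orb}(p)$), for every $x \in \mathcal{D}^s_p \setminus \mathrm{orb}(p)$ there is a unique $T_x \in \R$ with $f(x) = X_{T_x}(x)$. Define $\tau(x) := T_x$ on this dense subset.

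Second, I would extend $\tau$ continuously to all of $W^s(p) \setminus \mathrm{orb}(p)$ by the same flow-box argument used in Lemma \ref{lem.opendensereparametrization}. Fix $z \in W^s(p) \setminus \mathrm{orb}(p)$; pick $p_0 \in \mathcal{D}^s_p$ near $z$ with known value $T_{p_0}$, and use the $C^1$-diffeomorphism $\Psi(t,x) = X_t(x)$ on a neighborhood $(-2|T_{p_0}|,2|T_{p_0}|) \times \mathcal{N}(p_0,\delta)$ to get a flow box $V_{p_0}$. Approximating $z$ by $z_n \in \mathcal{D}^s_p$, the closedness of the compact piece of orbit $X_{[-2|T_{p_0}|,2|T_{p_0}|]}(z_n)$ forces $f(z) \in X_{[-2|T_{p_0}|,2|T_{p_0}|]}(z)$, so $f$ fixes $\mathrm{orb}(z)$ and $\tau(z)$ is uniquely defined. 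The continuity of $\Psi^{-1} \circ f$ then yields that $\tau$ extends continuously (in fact $C^1$) on $W^s(p) \setminus \mathrm{orb}(p)$, and the commutation with the flow gives $X$-invariance of $\tau$.

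Third, I would prove $\tau$ is constant on $W^s(p) \setminus \mathrm{orb}(p)$ by reproducing the two steps of Claim \ref{claim.constantalongstablemanifoldperiodic}. Using the strong stable foliation $W^s(p) = \bigcup_{t \in [0,\pi(p))} W^{ss}(X_t(p))$ and the fact that $f(W^{ss}(q)) = W^{ss}(f(q))$, if $\tau$ took two values $\tau(x) \neq \tau(y)$ on the same leaf $W^{ss}(q)$, then $f(x)$ and $f(y)$ would land in two different strong-stable leaves, contradicting $f(W^{ss}(q)) = W^{ss}(f(q))$; by $X$-invariance this makes $\tau$ constant on each connected component of $W^s(p) \setminus \mathrm{orb}(p)$. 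If these components carried distinct values $T_1 \neq T_2$, they would differ by a nonzero integer multiple of $\pi(p)$, and the $C^1$-convergence of $Df$ at the periodic orbit together with the hyperbolicity of $P^s_t(p)$ (so that $\|P^s_{T_1}(p)\| \neq \|P^s_{T_2}(p)\|$) would give the same contradiction as in Claim \ref{claim.constantalongstablemanifoldperiodic}. Hence $\tau \equiv T_p$ for a single constant $T_p \in \R$.

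Finally, by continuity of $f$ and density of $W^s(p) \setminus \mathrm{orb}(p)$ in $W^s(p)$, the identity $f(q) = X_{T_p}(q)$ passes to every $q \in W^s(p)$, including the orbit itself. The main delicate point is the flow-box extension from $\mathcal{D}^s_p$ to $W^s(p) \setminus \mathrm{orb}(p)$: one must control the approximating values $T_{z_n}$ so that a common compact parameter interval contains them all, exactly as in Lemma \ref{lem.opendensereparametrization}. Once this is in place, the remaining steps are straightforward adaptations of arguments already developed in Section \ref{sec.proofprop}.
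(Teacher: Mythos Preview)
Your proposal is correct and follows essentially the same approach as the paper: use $\mathrm{UND}^s$ via Lemma~\ref{lem.fixesDDs} to fix orbits on $\mathcal{D}^s_p$, run the flow-box argument of Lemma~\ref{lem.opendensereparametrization} inside $W^s(p)$ (using non-recurrence for the intrinsic topology in place of Conley's trapping region), and then apply the strong-stable-foliation and hyperbolicity arguments of Claim~\ref{claim.constantalongstablemanifoldperiodic} to force the reparametrization to be a single constant $T_p$. One small overclaim: the flow-box step, carried out exactly as in Lemma~\ref{lem.opendensereparametrization}, yields $\tau^s$ only on an open \emph{dense} set $V^s_p\subset W^s(p)$ (flow boxes are built around points of $\mathcal{D}^s_p$, not around an arbitrary $z$), so you do not directly get $\tau$ on all of $W^s(p)\setminus\mathrm{orb}(p)$; but this is harmless, since the constancy argument works on $V^s_p$ and the identity $f=X_{T_p}$ then propagates to the whole stable manifold by continuity.
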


\begin{proof}
Let $p\in \mathrm{Per}(X)$ be a hyperbolic periodic point of $X$. Since any two different periodic orbits have different periods (theorem \ref{thm.residualkupkabc}), $f$ fixes the orbit of $p$. Since the stable manifold $W^s(p)$ is a $C^1$-immersed submanifold, and $f$ fixes $W^s(p)$, we have that $f$ induces a $C^1$-diffeomorphim $f^s$ on $W^s(p)$, for the intrinsic topology. By lemma \ref{lem.fixesDDs}, $f$ fixes the orbits of the points in $\mathcal{D}^s_p$.

The points in a stable manifold are non-recurrent for the intrinsic topology, hence, by an argument similar to the one in the proof of lemma \ref{lem.opendensereparametrization}, we obtain an open set $V^s_p$ which is dense in $W^s(p)$, and a $C^1$-function $\tau^s: V^s_p \to \R$ such that $f^s(.) = X_{\tau^s(.)}(.)$. 

We claim that $\tau^s(.)$ is constant on $V_p^s$. The proof of this fact is essentially contained in the proof of claim \ref{claim.constantalongstablemanifoldperiodic} in lemma \ref{lemma.functionperiodicpoints}. We sketch the proof of this fact here, for more details on the arguments see the proof of claim \ref{claim.constantalongstablemanifoldperiodic} in lemma \ref{lemma.functionnoncriticalpoints}. 

First we prove that $\tau^s$ is constant on each connected component of $V^s_p$. If it was not the case, we could find two points $x,y\in W^{ss}(p) -\{p\}$ such that $0<|\tau^s(x) - \tau^s(y)|<\pi(p)$, where $\pi(p)$ is the period of $p$. This implies that $f(x) \in X_{\tau^s(x)}(W^{ss}(p))$ and $f(y) \in X_{\tau^s(y)}(W^{ss}(p))$. From this, one can deduce that $W^{ss}(f(x)) \cap W^{ss}(f(y)) = \emptyset$, which is a contradiction with the fact that $f(W^{ss}(p)) = W^{ss}(f(p))$. This implies that $\tau^s$ is constant in each connected component of $V^s_p$. 

Recall that on $TW^s(p)$ we defined the normal bundle $N_s = \langle X \rangle^{\perp}$. Let $\Pi^s: TW^s(p) \to N_s$ be the orthogonal projection on $N_s$ and let $P^s_t(.)$ be the linear Poincar\'e flow restricted to $W^s(p)$.

Suppose that $\tau^s$ does not take the same value in every connected component of $V^s_p$. Let $V_1$ and $V_2$ be two connected components of $V^s_p$ such that the numbers $T_1 := \tau^s|_{V_1}$ and $T_2:= \tau^s|_{V_2}$ are not equal. We may choose $q_1^n$ a sequence in $V_1$ converging to $p$ and $q^n_2$ a sequence in $V_2$ converging to $p$. Since $f^s$ is $C^1$, we would have that
\[
\begin{array}{rcl}
\displaystyle \lim_{n\to + \infty} \Pi^s(f^s(q^n_1))Df^s(q^n_1)\Pi^s(q^n_1) & = &\\
\displaystyle \lim_{n\to + \infty} \Pi^s(f^s(q^n_2))Df^s(q^n_2)\Pi^s(q^n_2) &= &\Pi^s(f^s(p))Df^s(p)\Pi^s(p).
\end{array}
\]
However, by the hyperbolicity of $p$ and since $T_2 \neq T_1$, we have $\|P^s_{T_1}(p)\| \neq \|P^s_{T_2}(p)\|$. This is a contradiction and $\tau^s$ is constant on $V^s_p$. Hence, there exists $T_p\in \R$ such that $\tau^s(q) = T_p$ for every $q\in V^s_p$. This easily implies that $f(q) = X_{T_p}(q)$, for any $q\in W^s(p)$.  
\end{proof}

By theorem \ref{thm.residualkupkabc}, in each connected component $O$ of $int(\Omega(X))$, there exists a periodic point $p$ whose stable manifold is dense in $O$. From lemma \ref{lem.fixingorbitsstablemanifolds}, there exists a number $T_p$ such that $f(q) = X_{T_p}(q)$, for any $q\in W^s(p)$. This implies that $f(q) = X_{T_p}(q)$ for any $q\in O$. 

Consider the open and dense set $W = V \cup int(\Omega(X))$. From the discussion above, there is a $C^1$-function $\hat{\tau}:W \to \R$ such that $f(.) = X_{\hat{\tau}(.)}(.)$ on $W$.   
  
\begin{lemma}
\label{lem.lndisgood}
There exists a constant $T>0$ such that $|\hat{tau}(x)| \leq T$ for any $x\in W$.
\end{lemma}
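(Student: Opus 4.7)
\emph{Strategy.} I would argue by contradiction, assuming $\hat{\tau}$ is unbounded on $W$, and reach a contradiction with the LND property (Corollary \ref{cor.LNDfarsinks}). The key input is a uniform bound on $\|P_{\hat{\tau}(x)}(x)\|$ coming from the $C^1$-regularity of $f$. Indeed, $\hat{\tau}$ is $C^1$ in a neighbourhood of every point of $W$ (it is $C^1$ on $V$ and locally constant on each component of $\mathrm{int}(\Omega(X))$), so differentiating the identity $f(y)=X_{\hat{\tau}(y)}(y)$ gives
$$Df(y)\cdot v = DX_{\hat{\tau}(y)}(y)\cdot v + d\hat{\tau}(y)(v)\cdot X(f(y));$$
projecting onto the normal bundle $N(f(y))$ kills the flow-direction term and yields $\Pi(f(y))\,Df(y)|_{N(y)} = P_{\hat{\tau}(y)}(y)$. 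Hence $\|P_{\hat{\tau}(y)}(y)\|\leq \|Df\|_{C^0}$, and the same reasoning applied to $f^{-1}$ gives $\|P_{-\hat{\tau}(y)}(f(y))\|\leq \|Df^{-1}\|_{C^0}$. Set $B := \max(\|Df\|_{C^0},\|Df^{-1}\|_{C^0})$.

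\emph{Flow-invariant extension (main obstacle).} To invoke LND along an orbit one needs the bound on every point of that orbit, but $V$ itself is not flow-invariant. I would therefore extend $\hat{\tau}$ flow-invariantly to $W^{\mathrm{sat}} := \bigcup_{s\in\R} X_s(W) = V^{\mathrm{sat}} \cup \mathrm{int}(\Omega(X))$ by setting $\hat{\tau}(X_s(y)) := \hat{\tau}(y)$. This is well defined: points of $V$ are non-periodic, and if $X_r(y), y \in V$ lie on the same orbit, the commutation $f\circ X_r = X_r\circ f$ applied to $f(y)=X_{\hat{\tau}(y)}(y)$ forces $\hat{\tau}(X_r(y)) = \hat{\tau}(y)$. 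The extension is $C^1$ as a composition of $\hat{\tau}|_W$ with the smooth flow, so the derivative identity of the previous paragraph transfers verbatim, yielding $\|P_{\hat{\tau}(y)}(y)\|\leq B$ and $\|P_{-\hat{\tau}(y)}(f(y))\|\leq B$ for every $y \in W^{\mathrm{sat}}$. Producing this well-defined $C^1$ extension and checking that the bound persists is the main technical point; once secured, the remainder of the argument is essentially automatic.

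\emph{LND contradiction.} Suppose $|\hat{\tau}(x_n)|\to\infty$ on some sequence $x_n\in W$. Passing to a subsequence, replacing $f$ by $f^{-1}$ if needed, and replacing each $x_n$ by a nonsingular point of its orbit (harmless since $\hat{\tau}$ is orbit-constant), we may assume $x_n \in M_X$ and $t_n := \hat{\tau}(x_n) \to +\infty$. The entire orbit of $x_n$ then lies in $W^{\mathrm{sat}}$ with $\hat{\tau} \equiv t_n$, so
$$\|P_{t_n}(y)\| \leq B \quad \text{and}\quad \|P_{-t_n}(f(y))\| \leq B \qquad \forall\, y \in \mathrm{orb}(x_n).$$
Now apply Corollary \ref{cor.LNDfarsinks} with $K := B+1$ to obtain $T_0 = T(K)$; for $n$ large enough, $t_n > T_0$ and there exists $s_n \in \R$ such that
$$\max\bigl\{\|P_{t_n}(X_{s_n}(x_n))\|,\ \|P_{-t_n}(X_{s_n+t_n}(x_n))\|\bigr\} > B+1.$$
However $X_{s_n}(x_n)$ and $X_{s_n+t_n}(x_n) = f(X_{s_n}(x_n))$ both lie on $\mathrm{orb}(x_n)$, so by the previous display both norms are $\leq B$, a contradiction. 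Hence $\hat{\tau}$ is bounded on $W$, which is the conclusion of the lemma.
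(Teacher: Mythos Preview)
Your argument is correct and follows the same route as the paper: differentiate $f(y)=X_{\hat\tau(y)}(y)$, project to the normal bundle to get $\Pi(f(y))\,Df(y)|_{N(y)}=P_{\hat\tau(y)}(y)$, bound this by $\|f\|_{C^1}$, and contradict the LND property. The paper does it in one shot (pick a single $x\in W$ with $|\hat\tau(x)|>T(K)$ rather than a sequence), but the logic is identical.

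The one point where you are \emph{more} careful than the paper is the flow-saturation step. The LND property produces a point $y=X_s(x)$ on the orbit of $x$, and to conclude $\|Df(y)\|\geq\|P_{\hat\tau(x)}(y)\|$ one needs the derivative identity at $y$, i.e.\ that $\hat\tau$ is $C^1$ near $y$. The paper simply writes the inequality at $y$ without comment; you correctly observe that $W$ is not a priori $X_t$-invariant and fill the gap by extending $\hat\tau$ flow-invariantly (well defined since $\hat\tau$ is constant along orbits, $C^1$ because for each $y\in W^{\mathrm{sat}}$ one can choose a fixed $s_0$ with $X_{-s_0}(y)\in W$ and use $\hat\tau=\hat\tau\circ X_{-s_0}$ on a neighbourhood). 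This is a genuine clarification of the paper's proof, not a different method.

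One cosmetic remark: your phrase ``replacing each $x_n$ by a nonsingular point of its orbit'' is unnecessary and would fail if $x_n$ were itself a singularity. It suffices to note that on each connected component of $\mathrm{int}(\Omega(X))$ the function $\hat\tau$ is constant, so any $x_n$ with $|\hat\tau(x_n)|$ large can be chosen non-singular from the outset (and points of $V\subset M\setminus\mathcal{CR}(X)$ are automatically non-singular).
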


\begin{proof}
For $x\in W$ and any vector $v\in T_xM$, the following formula holds:
\begin{equation}
\label{eq.formuladerivative}
Df(x)v = DX_{\hat{\tau}(x)}(x)v + X(X_{\hat{\tau}(x)}(x))D\hat{\tau}(x)v
\end{equation}
From this formula, one can see that $\Pi(f(x)) \circ Df(x)|_{N(x)} = P_{\hat{\tau}(x)}(x)$. Take $K> \|f\|_{C^1}$ and let $T=T(K)>0$ be the uniform time given by the LND property. 

If $\hat{\tau}$ was not uniformly bounded, there would be a point $x\in W$ such that $|\hat{\tau}(x)| > T$. By the LND property, there exists a point $y\in orb(x)$ such that
\[
\max\{\|P_{\hat{\tau}(x)}(y)\|, \|P_{-\hat{\tau}(x)} (X_{\hat{\tau}(x)}(y))\|\}> K.
\]

Then,
\[
\max\{ \|Df(y)\|, \|Df^{-1}(f(y))\|\} \geq \max\{\|P_{\hat{\tau}(x)}(y)\|, \|P_{-\hat{\tau}(x)} (X_{\hat{\tau}(x)}(y))\|\}> K.
\]  
This is a contradiction since $K> \|f\|_{C^1}$. Therefore, $\hat{\tau}$ is uniformly bounded on $W$.
\end{proof}

Let $I= [-T,T]$. From lemma \ref{lem.lndisgood}, we obtain that for any point $x\in W$, it is verified that $f(x) \in X_I(x)$. Since $W$ is dense, for any point $z\in M$, there is a sequence $(y_n)_{n\in \N}$ contained in $W$ such that $\displaystyle \lim_{n\to +\infty} y_n = y$. Since $X_I(y_n) \xrightarrow{n\to +\infty} X_I(y)$, and by the continuity of $f$, we conclude that $f(y) \in X_I(y)$. In particular, $f$ fixes every orbit of $X$ and $f(p) \in X_{[-T,T]}(p)$, for any $p\in M$. 

By proposition \ref{prop.mainpropextension}, there is a continuous function $\tau:M \to \R$ such that $f(.) = X_{\tau(.)}(.)$. This proves that the centralizer is quasi-trivial. 

In the case that $X$ has at most countably many chain-recurrent classes, since the function $\tau$ is an $X$-invariant continuous function, using the same arguments as in section $6.4$ of \cite{LeguilObataSantiago}, we conclude that $\tau$ is a constant function. In particular, the centralizer is trivial. 

\section{A remark on the centralizers}
\label{sec.furtherremarks}
As we mentioned before, the diffeomorphism centralizer is less rigid than the vector field centralizer, mentioned in the introduction. In this section we give one example that justifies it.

By the work of \cite{KatoMorimoto} the vector field centralizer of an Anosov flow is trivial. However, it is easy to construct an Anosov flow and a diffeomorphism that commutes with the flow and which does not fixes the orbits of the flow. For example, one can take two hyperbolic matrices $A,B \in SL(n,\mathbb{Z})$ that commute and such that the group they generate is isomorphic to $\mathbb{Z}^2$; that is, $A^l B^k = Id$ if and only if $l=k=0$. For example, in dimension $3$ one may consider the matrices
\[
A=
\begin{pmatrix}
3 & 2 & 1\\
2 & 2 & 1\\
1&1&1
\end{pmatrix}
\textrm{ and }
B = 
\begin{pmatrix}
2& 1 & 1\\
1& 2 & 0\\
1 & 0 & 1
\end{pmatrix}
.
\]

These matrices induce Anosov diffeomorphisms on the torus $\mathbb{T}^3$. Consider the suspension flow of $A$, this gives an Anosov flow $X^A_t$. Using $B$ one can easily construct a diffeomorphism $f_B$ that commutes with the the flow $X^A_t$ and that does not fixes orbit. In particular, the centralizer of $X^A_t$ is not trivial.

\bibliographystyle{alpha}

\information

\end{document}